\documentclass[11pt,a4paper]{amsart}
\usepackage{amsmath,amsthm,amssymb,latexsym}
\usepackage{graphicx}

\newcommand{\re}{\text{\rm Re\,}}
\newcommand{\im}{\text{\rm Im\,}}

\newcommand{\bd}{{\mathbb{D}}}
\newcommand{\bn}{{\mathbb{N}}}

\newcommand{\bz}{{\mathbb{Z}}}
\newcommand{\bc}{{\mathbb{C}}}

\newcommand{\cs}{\mathcal{S}}

\newcommand{\cp}{{\mathcal{P}}}


\renewcommand{\a}{\alpha}

\renewcommand{\l}{\lambda}
\renewcommand{\ll}{\Lambda}
\newcommand{\s}{\sigma}

\newcommand{\p}{\varphi}

\renewcommand{\th}{\theta}
\renewcommand{\d}{\delta}
\newcommand{\dd}{\Delta}
\renewcommand{\o}{\omega}

\newcommand{\g}{\gamma}

\newcommand{\ep}{\varepsilon}
\newcommand{\z}{\zeta}

\newcommand{\bsl}{\backslash}
\newcommand{\ovl}{\overline}

\DeclareMathOperator{\diag}{\rm diag}

\DeclareMathOperator{\dist}{\rm dist}

\allowdisplaybreaks
\numberwithin{equation}{section}

\newtheorem{theorem}{Theorem}[section]

\newtheorem{corollary}[theorem]{Corollary}
\newtheorem{proposition}[theorem]{Proposition}

\theoremstyle{definition}

\newtheorem{remark}[theorem]{Remark}

\begin{document}

\title[Jacobi matrices]
{Perturbation determinants and discrete spectra of semi-infinite non-self-adjoint  Jacobi operators}
\author[L. Golinskii]{L. Golinskii}

\address{B. Verkin Institute for Low Temperature Physics and
Engineering, Ukrainian Academy of Sciences, 47 Nauky ave., Kharkiv 61103, Ukraine}
\email{golinskii@ilt.kharkov.ua}

\date{\today}

\keywords{Non-self-adjoint Jacobi matrices; discrete spectrum; perturbation determinant; Jost solutions}
\subjclass[2010]{47B36, 47A10, 47A75}

\maketitle

\begin{abstract}
We study the trace class perturbations of the half-line, discrete Laplacian and obtain a new bound for the
perturbation determinant of the corresponding non-self-adjoint Jacobi operator. Based on this bound, we obtain
the Lieb--Thirring inequalities for such operators. The spectral enclosure for the discrete spectrum and embedded 
eigenvalues are also discussed.
\end{abstract}

\vspace{0.5cm}
\begin{center}
{\large\it In memory of Sergey Naboko (1950--2020)}
\end{center}

\section*{Introduction}

In the last two decades there was a splash of activity around the spectral theory of non-self-adjoint perturbations
of some classical operators of mathematical physics, such as the Laplace and Dirac operators on the whole space,
their fractional powers, and others. Recently, there has been some interest in studying certain discrete models of the above
problem. In particular, the structure of the spectrum for compact, non-self-adjoint perturbations of the free Jacobi
and the discrete Dirac operators has attracted much attention lately. Actually the problem concerns the discrete component
of the spectrum and the rate of its accumulation to the essential spectrum. Such type of results under the various assumptions
on the perturbations are united under a common name {\it Lieb--Thirring inequalities}. 
For a nice account of the existing results on the problem for non-self-adjoint, two-sided Jacobi 
operators, the reader may consult two recent surveys \cite{dhk13} and \cite[Section 5.13]{fran20} and references therein.

The spectral theory of semi-infinite, {\sl self-adjoint} Jacobi matrices is quite popular owing to their tight relation to the
theory of orthogonal polynomials on the real line \cite{sisz}. In contrast, there are only a few papers where semi-infinite, non-self-adjoint
Jacobi matrices are examined \cite{lya68, akwa95, beck01, eggo05, goeg05, goku07, bfv19}. 

The main object under consideration is a semi-infinite Jacobi matrix
\begin{equation}\label{hljac}
J(\{a_j\},\{b_j\},\{c_j\})_{j\in\bn} =
\begin{bmatrix}
b_1 & c_1 &  &  & \\
a_1 & b_2 & c_2 & & \\
& a_2 & b_3 & c_3 & \\
& & \ddots & \ddots & \ddots
\end{bmatrix}, 
\end{equation}
with uniformly bounded complex entries, and $a_jc_j\not=0$, $j\in\bn:=\{1,2,\ldots\}$. The spectral theory of the underlying non-self-adjoint 
Jacobi operator includes, among others, the structure of its spectrum. We denote by $J_0$ the semi-infinite discrete Laplacian, 
i.e., $J_0=J(\{1\},\{0\},\{1\})$. If $J-J_0$ is a compact operator, that is,
$$ \lim_{n\to\infty} a_n=\lim_{n\to\infty} c_n=1,\quad \lim_{n\to\infty} b_n=0, $$
the geometric image of the spectrum is plainly evident
$$ \s(J)=\s_{ess}(J_0)\cup \s_d(J)=[-2,2]\cup \s_d(J), $$
the discrete component $\s_d(J)$, i.e., the set of isolated eigenvalues of finite algebraic multiplicity, is an at most countable 
set of points in $\bc\backslash [-2,2]$ with the only possible limit points on $[-2,2]$. 
To get some quantitative information on the rate of such accumulation one has to impose some further assumptions on the perturbation. Our case of
study in the paper is the {\it trace class} perturbations of the discrete Laplacian $J_0$
\begin{equation}\label{trclper}
J-J_0\in\cs_1 \ \Leftrightarrow \ \sum_{n=1}^\infty (|1-a_n|+|b_n|+|1-c_n|)<\infty,
\end{equation}
see, e.g., \cite[Lemma 2.3]{kisi03}.

Our strategy is similar to that in \cite{eggo05} and \cite{go21-1}. The key issue is the bound for the {\it perturbation determinant}
\begin{equation}\label{perdeter}
L(\l,J):=\det(I+(J-J_0)(J_0-\l)^{-1})
\end{equation}
introduced by M.G. Krein \cite{gokr67} in the late 1950s. The main feature
of this analytic function on the resolvent set $\rho(J_0)=\ovl\bc\bsl [-2,2]$ is that the set of its zeros agrees with the discrete spectrum
of the perturbed operator $J$, and moreover, the multiplicity of each zero equals the algebraic multiplicity of the corresponding eigenvalue.
So the original problem of the spectral theory can be restated as a classical problem of the zero distributions of analytic functions, which goes back 
to Jensen and Blaschke.

The argument is pursued in two steps. The first one results in the bound for the perturbation determinant, typical for the functions
of non-radial growth. The classes of such analytic (and subharmonic) functions in the unit disk were introduced and studied in 
\cite{bgk09, fago09} (for some advances see \cite{bgk18}). The Blaschke-type conditions for the zero sets (Riesz measures) were proved therein, 
with an important amplification in \cite[Theorem 4]{haka11}, better adapted for applications. 
The second step is just the latter result applied to the bound mentioned above.

Such two-step algorithm is applied, by and large, in \cite{haka11} to the two-sided Jacobi operators.
In our approach to the problem the argument in the first step is totally different. The point is that for semi-infinite
Jacobi operators neither the Fourier transform machinery, nor a simple matrix representation for the resolvent of the free Jacobi operator are at 
our disposal. Instead, we deal with the associated three-term recurrence relation
\begin{equation}\label{3term}
u_{k-1}+b_ku_k+a_kc_ku_{k+1}=\l(z)u_k, \quad k\in\bn, \quad \l(z)=z+\frac1z,
\end{equation}
and its solutions. Here $\l(\cdot)$ is the Zhukovsky function which maps the unit disk onto the resolvent set 
$\rho(J_0)=\ovl\bc\bsl [-2,2]$. 

The relation between solutions of \eqref{3term} and eigenvectors of $J$ is straightforward: $(v_k)_{k\ge1}$ is an eigenvector of $J$ if and
only if
$$ (u_k)_{k\ge0}: \ \ u_0=0, \quad u_1=v_1, \quad u_k=\frac{v_k}{a_1\ldots a_{k-1}}\,, \quad k\ge2 $$
is a solution of \eqref{3term}.

The solution $u^+=(u^+_k)_{k\ge0}$ of \eqref{3term} is called the {\it Jost solution} if
\begin{equation}\label{jossol}
\lim_{k\to\infty} z^{-k}u_k^{+}(z)=1, \qquad z\in\bd_0:=\bd\bsl\{0\}.
\end{equation}
Under certain assumptions on $J$ the Jost solution exists and unique (see Theorem \ref{boundjost} below).

We study the Jost solutions by reducing the difference equation \eqref{3term} to a Volterra-type discrete integral equation, the standard idea 
in analysis, see, e.g., \cite[Section 7.5]{te00}, \cite{eggo05, go21-1}. The bounds for the Jost solutions stem from the successive approximations method. 
The crucial point in such an approach is that the perturbation determinant agrees with the first coordinate $u^+_0$ of the Jost solution, 
known as the {\it Jost function}, see the non-self-adjoint version of Killip and Simon \cite[Theorem 2.16]{kisi03} 
(the calculation there has nothing to do with self-adjointness).

The Lieb--Thirring inequality for the discrete spectrum of semi-infinite Jacobi operators looks as follows.

\begin{theorem}\label{mainth}
Let $J-J_0\in\cs_1$. Then for each $\ep\in(0,1)$ there is a constant $C(\ep)>0$ so that
\begin{equation}\label{mainlt}
\sum_{\l\in\s_d(J)} \frac{\dist(\l,[-2,2])}{|\l^2-4|^{{\frac{1-\ep}2}}}\le C(\ep)\dd, \quad \dd:=\sum_{n=1}^\infty (|b_n|+|1-a_nc_n|).
\end{equation}
\end{theorem}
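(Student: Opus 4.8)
The plan is to transplant the problem to the unit disk through the Zhukovsky map $\l(z)=z+1/z$ and then invoke a Blaschke-type theorem. Put $f(z):=L(\l(z),J)$; by \eqref{perdeter} this is holomorphic in $\bd$, and since $(J_0-\l)^{-1}\to0$ as $\l\to\infty$ (i.e. $z\to0$) we have $f(0)=1$. By the characteristic property of the perturbation determinant recalled above, the zeros $\{z_k\}\subset\bd$ of $f$ are exactly the points with $\l(z_k)\in\s_d(J)$, counted with multiplicity. Since $J$ is bounded the eigenvalues are bounded, so there is $r_0>0$ with $|z_k|\ge r_0$ for all $k$. The elementary identities
\begin{equation}\nonumber
\l(z)-2=\frac{(z-1)^2}{z},\qquad \l(z)+2=\frac{(z+1)^2}{z},\qquad \l(z)^2-4=\frac{(z-1)^2(z+1)^2}{z^2},
\end{equation}
together with the comparison $\dist(\l(z),[-2,2])\asymp(1-|z|)\,|z-1|\,|z+1|$, valid for $r_0\le|z|<1$ (checked by separating the band interior, where the distance equals $|\im\l|\asymp(1-|z|)$, from neighbourhoods of $z=\pm1$), turn the left-hand side of \eqref{mainlt} into
\begin{equation}\nonumber
\sum_{\l\in\s_d(J)}\frac{\dist(\l,[-2,2])}{|\l^2-4|^{\frac{1-\ep}2}}\asymp\sum_k(1-|z_k|)\,|z_k-1|^{\ep}\,|z_k+1|^{\ep}.
\end{equation}
Thus it suffices to bound the right-hand sum by $C(\ep)\dd$.

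The second ingredient is a growth bound for $f$. Here I use that $L(\l,J)$ coincides with the Jost function $u_0^+$, the first coordinate of the Jost solution \eqref{jossol}. Following the standard scheme, I rewrite \eqref{3term} (which involves the perturbation only through $b_k$ and $a_kc_k$) as a Volterra summation equation for the normalized sequence $m_k:=z^{-k}u_k^+$; its kernel is $(1-z^{2(l-k)})/(z^{-1}-z)$, whose modulus does not exceed $2/|z-z^{-1}|$ for $l>k$ and $|z|<1$. The successive approximations underlying Theorem \ref{boundjost} then yield
\begin{equation}\nonumber
|f(z)|=|u_0^+(z)|\le\exp\Bigl(\frac{C\dd}{|z-z^{-1}|}\Bigr),\qquad\text{i.e.}\qquad \log|f(z)|\le\frac{C\dd\,|z|}{|z-1|\,|z+1|}\le C'\dd\Bigl(\frac1{|z-1|}+\frac1{|z+1|}\Bigr).
\end{equation}
The decisive feature is that this bound has \emph{no} radial singularity: all the growth is carried by two simple boundary singularities, at $z=1$ and $z=-1$, which are the preimages of the band edges $\pm2$.

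It remains to feed this bound into the amplified Blaschke-type theorem of \cite[Theorem 4]{haka11}. Applied to $f$ with $f(0)=1$ and singular points $\z_1=1$, $\z_2=-1$ of order one, it produces, for every $\ep\in(0,1)$,
\begin{equation}\nonumber
\sum_k(1-|z_k|)\,|z_k-1|^{\ep}\,|z_k+1|^{\ep}\le C(\ep)\,\dd,
\end{equation}
which is precisely the sum reached in the first paragraph; combining the two displays gives \eqref{mainlt}. I expect the main obstacle to be the growth bound of the second paragraph, and specifically the fact that its only singularities sit at $z=\pm1$ with no $(1-|z|)^{-\alpha}$ factor: this is exactly what allows the weight $(1-|z_k|)$ to appear in the conclusion with power one rather than $1+\ep$, matching the bulk asymptotics $\dist(\l,[-2,2])\asymp1-|z|$ that a nonzero radial exponent would spoil. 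The absence, in the semi-infinite setting, of the Fourier transform and of a closed matrix form for $(J_0-\l)^{-1}$ is what forces this Jost-solution route in place of a direct resolvent estimate; a secondary technical point is the uniform control of the kernel and the verification of the distance comparison near the edges.
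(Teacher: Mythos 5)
Your proposal is correct and follows essentially the same route as the paper's proof: identify the perturbation determinant with the Jost function, bound it via the Volterra equation and successive approximations (the paper's Theorem \ref{boundjost}, giving $\log|L(\l(z),J)|\le 2|z|\dd/(|1-z||1+z|)$, i.e.\ \eqref{bo1}), and then apply \cite[Theorem 4]{haka11} together with the Zhukovsky distortion estimate \cite[Lemma 7]{haka11} to translate the Blaschke-type condition on the zeros into \eqref{mainlt}. The only cosmetic deviations are that you absorb the harmless $|z_k|^{\pm\ep}$ factors using the lower bound $|z_k|\ge r_0$, which is legitimate since the eigenvalues of the bounded operator $J$ stay in a bounded set.
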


If $J$ is the discrete Schr\"odinger operator, that is, $a_n=c_n\equiv 1$, then
\begin{equation}\label{mainlt1}
\sum_{\l\in\s_d(J)} \frac{\dist(\l,[-2,2])}{|\l^2-4|^{{\frac{1-\ep}2}}}\le C(\ep)\|J-J_0\|_1.
\end{equation}

\begin{remark}\label{simil}
The value $\dd$ in \eqref{mainlt} in place of $\|J-J_0\|_1$ looks quite natural, at least for small perturbations. Indeed, given a 
Jacobi matrix $J$, consider a class $S(J)$ 
of Jacobi matrices
\begin{equation*}
\begin{split}
S(J) &=\{\widehat J:=T^{-1}JT, \ T=\diag(t_j)_{j\in\bn} \ {\rm is \ a \ diagonal \ isomorphism \ of}\ \ell^2(\bn)\}, \\
\widehat J &=J\bigl(\{a_jr_j\}, \{b_j\}, \{c_jr_j^{-1}\}\bigr), \quad r_n=\frac{t_n}{t_{n+1}}, \quad n\in\bn.
\end{split}
\end{equation*} 
Clearly, $\s_d(\widehat J)=\s_d(J)$ since $\widehat J$ is similar to $J$. Hence the left side of \eqref{mainlt} is constant within the class
$S(J)$, and so is $\dd$, in contrast to $\|J-J_0\|_1$. 
For the class $S(J_0)$ both sides of \eqref{mainlt} vanish, whereas $\|J-J_0\|_1$, $J\in S(J_0)$,
can be arbitrarily large.

Next, $|1-a_nc_n|\le |1-a_n|+|1-c_n|+|1-a_n||1-c_n|$, and so
\begin{equation}\label{ddnorm}
\begin{split}
\dd &\le \sum_{n=1}^\infty \Bigl(|b_n|+|1-a_n|+|1-c_n|\Bigr)+\left(\sum_{n=1}^\infty |1-a_n|\right)\left(\sum_{n=1}^\infty |1-c_n|\right) \\
&\le 3\|J-J_0\|_1+\|J-J_0\|_1^2.
\end{split} 
\end{equation}
We see that for small perturbations the value $\dd$ has at least the same order as $\|J-J_0\|_1$.
\end{remark}

The bounds for the Jost functions, obtained in the course of the proof of our main statement, provide some new spectral enclosure results.

\begin{theorem}\label{mainth2}
$(i)$. Let $J-J_0\in\cs_1$. The discrete spectrum $\s_d(J)$ belongs to the following Cassini oval
\begin{equation}\label{casov}
\s_d(J)\subset \left\{\l\in\bc\backslash [-2,2]: \ |\l^2-4|\le \left(\frac{2\dd}{\log 2}\right)^2\right\}. 
\end{equation}

$(ii)$. Assume that
\begin{equation}\label{mom1}
\dd_1:=\sum_{n=1}^\infty n\Bigl(|b_n|+|1-a_{n-1}c_{n-1}|\Bigr)<\infty.
\end{equation}
Then the discrete spectrum is missing, $\s_d(J)=\emptyset$, as long as $\dd_1<\log 2$.
\end{theorem}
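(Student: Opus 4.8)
The plan is to recast everything in terms of the Jost function. From the discussion preceding the statement, under $J-J_0\in\cs_1$ the perturbation determinant $L(\cdot,J)$ coincides, after composition with the Zhukovsky map $\l=\l(z)=z+1/z$, with the Jost function $z\mapsto u_0^+(z)$, and the zeros of $L(\cdot,J)$ reproduce $\s_d(J)$ with multiplicities. Hence $\s_d(J)$ is exactly the $\l(\cdot)$-image of the zero set of $u_0^+$ in $\bd_0$, and to locate $\s_d(J)$ it suffices to locate the zeros of $u_0^+$. The bridge between the two planes is the identity
\begin{equation*}
\l^2-4=(z+1/z)^2-4=(z-1/z)^2, \qquad\text{so that}\qquad |\l^2-4|=|z-1/z|^2,
\end{equation*}
which turns the $z$-variable Jost estimates into the Cassini geometry of \eqref{casov}.

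For $(i)$ I would feed in the Jost bound furnished by Theorem \ref{boundjost}, which from the shape of \eqref{casov} I expect to read
\begin{equation*}
|u_0^+(z)-1|\le \exp\lp\frac{2\dd}{|z-1/z|}\rp-1, \qquad z\in\bd_0.
\end{equation*}
Wherever the right-hand side is $<1$ the function $u_0^+$ is zero-free, and this is the case precisely when $2\dd/|z-1/z|<\log 2$, i.e. $|z-1/z|>2\dd/\log 2$. Reading this through $|z-1/z|^2=|\l^2-4|$, any point of $\s_d(J)$ must satisfy $|\l^2-4|\le (2\dd/\log 2)^2$, which is exactly \eqref{casov}.

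For $(ii)$ the estimate above degenerates near the band edges $\l=\pm2$, i.e. $z=\pm1$, where $|z-1/z|\to0$; it therefore cannot by itself prevent zeros from approaching $[-2,2]$. The first-moment hypothesis $\dd_1<\infty$ is tailored to this defect: the weight $n$ absorbs the singular $|z-1/z|^{-1}$ factor coming from the free Green's function, and the successive-approximation scheme then returns an estimate uniform up to the closed disk, which I expect in the form
\begin{equation*}
|u_0^+(z)-1|\le \exp(\dd_1)-1, \qquad z\in\bd_0.
\end{equation*}
When $\dd_1<\log 2$ the right-hand side is strictly below $1$, so $u_0^+$ has no zeros anywhere in $\bd_0$; by the reduction of the first paragraph, $\s_d(J)=\emptyset$.

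The routine portion is the transfer through the Zhukovsky map together with the threshold arithmetic $\exp(t)-1<1\Leftrightarrow t<\log2$. The real work lies in part $(ii)$, in producing the boundary-uniform bound under the first-moment condition. Concretely, I would pass to the reduced sequence $m_k=z^{-k}u_k^+$, set up the Volterra summation equation it satisfies, isolate the Green's-function factor that creates the $|z-1/z|^{-1}$ denominator, and show that pairing this factor against the weight $n$ in $\dd_1$ yields a geometric majorant whose sum is $z$-independent and controlled by $\dd_1$. The delicate step is confirming that this bound persists up to $z=\pm1$, where the two free solutions $z^{\pm k}$ degenerate and the naive kernel estimates break down; this is where the moment assumption is genuinely consumed.
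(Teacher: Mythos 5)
Your proposal is correct and takes essentially the same route as the paper: the bounds you anticipate are exactly \eqref{bouell} and \eqref{boumom} of Theorem \ref{boundjost} with $k=0$ (note $|\o(z)|=2|z|/|1-z^2|=2/|z-1/z|$), and the paper likewise concludes by combining the identification $L(\l(z),J)=u_0^+(z)$ with the identity $|\l^2-4|=|1-z^2|^2/|z|^2$ and the threshold $e^t-1<1\Leftrightarrow t<\log 2$. The only difference is presentational: the boundary-uniform estimate you flag as ``the real work'' in part $(ii)$ is not re-derived in the paper's proof but simply quoted from Theorem \ref{boundjost}$(ii)$, which was established earlier by the successive-approximation scheme you sketch.
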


Note that the latter effect is inherent to semi-infinite Jacobi matrices.

The Lieb--Thirring inequality for {\sl two-sided} Jacobi operators  is due to Hansmann and Katriel \cite[Theorem 1]{haka11}. It states that
for each $\ep\in(0,1)$ there is a constant $C(\ep)>0$ so that
\begin{equation}\label{hankat}
\sum_{\l\in\s_d(J)} \frac{\dist(\l,[-2,2])^{1+\ep}}{|\l^2-4|^{\frac12+\frac{\ep}4}}\le C(\ep)\|J-J_0\|_1.
\end{equation}
It has been proved recently in \cite{go21-1}, that this bound can be refined. Actually, \eqref{mainlt1} holds for two-sided Jacobi operators
as well. Now the Wronskian of two Jost solutions plays a key role.

The result of Hansmann--Katriel \eqref{hankat} is known to be sharp in the sense that \eqref{hankat} is false for $\ep=0$. 
To prove that, the authors of \cite{bostam20} introduce a special class of two-sided Jacobi operators with rectangular (step) potentials. 
In Section \ref{s3} we deal with an obvious counterpart of this class in semi-infinite setting. 

Given $n\in\bn$ and $h>0$, we study a semi-infinite discrete Schr\"odinger operator with a pure imaginary step potential
\begin{equation}\label{recpot}
J_{n,h}:=
\begin{bmatrix}
b_1 & 1 &  &  & \\
1 & b_2 & 1 & & \\
& 1 & b_3 & 1 & \\
& & \ddots & \ddots & \ddots
\end{bmatrix}, \ 
b_j=\left\{
  \begin{array}{ll}
    ih, & \hbox{$j=1,\ldots,n;$} \\
    0, & \hbox{$j\ge n+1$.}
  \end{array}
\right.
\end{equation}
It is clear that $J_{n,h}=J_0+ih\cp_n$, $\cp_n$ is the orthogonal projection onto the linear span of the first $n$ basis vectors $\{e_1,\ldots,e_n\}$.

We write $J_{n,h}=J_{n,\a}$ for particular values of $h=h_n=n^{-\a}$, $0<\a<~1$.
For such operators some quantitative information about the discrete spectrum, which provides sharpness of Theorem \ref{mainth}, 
can be gathered. 

\begin{theorem}\label{shar}
For the discrete Schr\"odinger operators $J_{n,\a}$ the following lower bound holds for large enough $n$
\begin{equation}\label{sharp}
\frac1{\|J_{n,\a}-J_0\|_1}\,\sum_{\l\in\s_d(J_{n,\a})} \frac{\dist(\l,[-2,2])}{|\l^2-4|^{1/2}}\ge C\log n.
\end{equation}
In particular,
\begin{equation}\label{shar1}
\lim_{n\to\infty} \frac1{\|J_{n,\a}-J_0\|_1}\,\sum_{\l\in\s_d(J_{n,\a})} \frac{\dist(\l,[-2,2])}{|\l^2-4|^{1/2}}=+\infty.
\end{equation}
\end{theorem}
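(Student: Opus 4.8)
The plan is to obtain a closed form for the perturbation determinant of $J_{n,h}$, to locate its zeros --- the eigenvalues of $J_{n,h}$ --- asymptotically when $h=n^{-\a}$ and $n\to\infty$, and then to sum their contributions to the left side of \eqref{sharp}. Since $J_{n,\a}-J_0=ih\cp_n$ has rank $n$, one has $\|J_{n,\a}-J_0\|_1=hn=n^{1-\a}$, the normalising factor in \eqref{sharp}, while $\|J_{n,\a}-J_0\|=h$, so by normality of $J_0$ every eigenvalue lies within distance $h$ of $[-2,2]$. I would compute $L(\l,J_{n,h})=u_0^+$ from the step structure: beyond the support of the perturbation the Jost solution is free, $u^+_k=z^k$ for $k\ge n$; solving \eqref{3term} backwards through the block $1\le k\le n$ (on which $2\cos\p=\l-ih$) and reading off the first coordinate then gives
\begin{equation*}
L(\l,J_{n,h})=\frac{z^{n}}{\sin\p}\,\bigl[\sin(n+1)\p-z\sin n\p\bigr],\qquad \l=z+\tfrac1z,\quad 2\cos\p=\l-ih,\quad |z|<1 .
\end{equation*}
Hence the eigenvalues are the solutions in the open disk of $\sin(n+1)\p=z\sin n\p$, that is, of $\cot(n\p)=(z-\cos\p)/\sin\p$.

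For the asymptotics I would take $\p=\th+i\eta$ near a real angle $\th\in(0,\pi)$. As $\l=2\cos\p+ih$, a point near the spectrum has $z\approx e^{-i\th}$ with $|z|^2\approx1-h\sin\th$, so $\re\l\approx2\cos\th$ and $\im\l\approx h$, and the eigenvalue equation becomes $\cot(n\p)+i\approx ih/(2\sin\th)$, of order $h$. Writing $\cot(n\p)+i=2i/(1-e^{-2in\p})$ recasts this as $e^{-2in\p}\approx-4\sin\th/h$: its modulus fixes $\eta=\im\p\approx\frac1{2n}\log\frac{4\sin\th}{h}=O(\tfrac{\log n}{n})$, negligible beside $h=n^{-\a}$ when $\a<1$ (so that indeed $\im\l_j\approx h$), while its argument quantises $\th$ with spacing $\pi/n$, yielding one eigenvalue $\l_j$ for each $j$ in a suitable range, with $\th_j\asymp j\pi/n$. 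I would make this rigorous --- in particular establish the \emph{existence} of these zeros --- by a Rouch\'e argument for $L$ on a grid of small contours in the $z$-plane, with errors controlled uniformly in $j$ over the range $C'h\le\th_j\le\tfrac12$, where the perturbation $h/\sin\th_j$ stays small.

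It then remains to sum. For such $\l_j$ one has $\re\l_j\approx2\cos\th_j\in(-2,2)$, so $\dist(\l_j,[-2,2])=\im\l_j\approx h$, whereas $|\l_j^2-4|^{1/2}=|z_j-z_j^{-1}|\approx2\sin\th_j$; thus each term in \eqref{sharp} is $\approx h/(2\sin\th_j)$. Using $\sin\th\le\th$ and summing over this range,
\begin{equation*}
\sum_{\l\in\s_d(J_{n,\a})}\frac{\dist(\l,[-2,2])}{|\l^2-4|^{1/2}}\ \gtrsim\sum_{C'h\le\th_j\le1/2}\frac{h}{2\sin\th_j}\ \gtrsim\ \frac{hn}{2\pi}\!\!\sum_{c\,n^{1-\a}\le j\le c'n}\!\!\frac1j\ \gtrsim\ \frac{\a}{2\pi}\,hn\log n=\frac{\a}{2\pi}\,n^{1-\a}\log n .
\end{equation*}
Dividing by $\|J_{n,\a}-J_0\|_1=n^{1-\a}$ yields \eqref{sharp}, and \eqref{shar1} follows.

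The hard part is the middle step: turning the heuristic quantisation into rigorous existence and \emph{uniform} localisation of the zeros of $L$ across a range of angles $\th_j$ whose multiplicative width is $\asymp h^{-1}=n^{\a}$, for it is exactly this logarithmic span $\log(1/h)=\a\log n$ that produces the extra factor $\log n$ over the trace norm. The extreme edge $\th_j\lesssim h$, where the expansion of $\p$ degenerates and $|\l^2-4|^{1/2}$ is no longer $\asymp\sin\th_j$, is delicate; but this is harmless, since the sub-range $C'h\le\th_j\le\tfrac12$ already contributes order $\log n$, so those edge eigenvalues may be discarded.
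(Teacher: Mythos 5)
Your strategy is the paper's own, written in a slightly different coordinate. The closed form you derive for $L(\l,J_{n,h})$ is exactly the Chebyshev formula in the paper's closing Remark (since $U_k(\cos\p)=\sin((k+1)\p)/\sin\p$); your quantisation --- spacing $\pi/n$ in $\th$, $\im\p\asymp\log n/n$, $\im\l_j\approx h$ --- reproduces \eqref{e9}--\eqref{e12}; and your final summation is the computation leading to \eqref{ina}. The trouble is that the step you defer (``I would make this rigorous \dots by a Rouch\'e argument \dots with errors controlled uniformly'') is not a technicality but the entire content of the paper's proof. The paper executes it by passing to the second spectral variable $\z$, $\z+\z^{-1}=\l-ih$, so that eigenvalues correspond to roots in $\bd_0$ of the explicit polynomial $P_n$ of \eqref{aleq} subject to the side condition \eqref{spvind} (Proposition \ref{algeq}), and then applying Rouch\'e to the splitting $P_n=P_{n,2}-P_{n,1}$, $P_{n,1}(w)=2w^{2n+1}+in^{-\a}$, on curvilinear boxes $S_k$ whose radii \eqref{rad} and admissible index range \eqref{rangk} are engineered precisely so that $|P_{n,1}|>|P_{n,2}|$ holds on all of $\pt S_k$, uniformly in $k$ (Proposition \ref{segm}); after that one must still check $1-|z_k|^2>0$ so that each root generates a genuine eigenvalue. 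Doing Rouch\'e on $L$ directly in the $z$-disk, as you propose, would make that last verification automatic (zeros of the perturbation determinant in $\bd_0$ are eigenvalues), which is a real simplification in principle; but you supply no contours and no uniform error bounds, so as written the proposal asserts, rather than proves, the existence and location of the $\asymp n$ eigenvalues you sum over. That is the gap.

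There is also a concrete quantitative error at small angles. You take each term to be $\approx h/(2\sin\th_j)$ down to $\th_j\ge C'h$ and declare only the edge $\th_j\lesssim h$ delicate. But $\l_j-2\approx-\th_j^2+ih$, so $|\l_j^2-4|^{1/2}\asymp\max(\th_j,\sqrt{h})$: the approximation $|\l_j^2-4|^{1/2}\asymp2\sin\th_j$ already fails for $\th_j\lesssim\sqrt{h}=n^{-\a/2}$, where each term is only $O(\sqrt{h})$ and the whole range $C'h\le\th_j\le\sqrt{h}$ contributes $O(hn)=O(n^{1-\a})$ --- no logarithm. Since you are proving a lower bound, using the overestimate $h/(2\th_j)$ on that range is not harmless; it must be excised. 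The usable span is $\sqrt{h}\le\th_j\le\tfrac12$, which yields a factor $\tfrac{\a}{2}\log n$ rather than your claimed $\a\log n$; this is exactly why the paper carries the extra $n^{-\a/2}$ term in \eqref{e15.1} and cuts its angular range at $a=n^{-\a/4}$. The theorem survives with a smaller constant, so the slip is reparable, but your constant $\a/(2\pi)$ and the closing claim that the full range $C'h\le\th_j\le\tfrac12$ contributes termwise like $h/(2\sin\th_j)$ are wrong as stated. (Two minor slips of the same kind: $|z|^2\approx1-h/\sin\th$, not $1-h\sin\th$, and the modulus equation should read $e^{2n\eta}\approx4\sin^2\th/h$, in agreement with \eqref{e9}.)
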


The argument in \cite{bostam20} relies heavily on a simple matrix representation for the resolvent of the whole-line free Jacobi matrix 
and the theory of Kac--Murdock--Szeg\H{o} matrices, neither of which is available in the semi-infinite case. Instead, we analyze directly the
recurrence relations for eigenvectors and apply Rouch\'e's Theorem to the roots of certain algebraic equations.

\section{Jost solutions and discrete Volterra equations}
\label{s1}

We derive the bounds for the Jost solution $u^+$ by reducing the difference equation \eqref{3term} to the
Volterra-type discrete integral equation. The unity of the first coefficient \eqref{3term} appears to be crucial.

Define a (non-symmetric) Green kernel for $k,m\in\bz$ (as a two-sided Laurent matrix) by
\begin{equation}\label{green}
G(k,m;z) :=\left\{
  \begin{array}{ll}
    \frac{z^{m-k}-z^{k-m}}{z-z^{-1}}, & \hbox{$m\ge k,$} \\
    0, & \hbox{$m\le k,$}
  \end{array}
\right. \qquad z\in\bd_0.
\end{equation}
The basic properties of this kernel can be verified directly
\begin{equation}\label{grker}
\begin{split}
G(k,m-1;z)+G(k,m+1;z)-\Bigl(z+\frac1z\Bigr)\,G(k,m;z) &=\d_{k,m}, \\
G(k-1,m;z)+G(k+1,m;z)-\Bigl(z+\frac1z\Bigr)\,G(k,m;z) &=\d_{k,m}.
\end{split}
\end{equation}
We make use of these properties for $k,m\in\bn_0:=\{0,1,\ldots\}$. The kernel
\begin{equation}\label{traker}
T(k,m;z) :=-b_mG(k,m;z)+(1-a_{m-1}c_{m-1})G(k,m-1;z) 
\end{equation}
is a key player of the game. Note that the values of $b_0$, $a_{-1}$, $c_{-1}$ are immaterial.

\begin{theorem}\label{difinteq}
The Jost solution $u^+=(u^+_k)_{k\ge0}$ of the difference equation $\eqref{3term}$ satisfies the discrete Volterra-type equation
\begin{equation}\label{voltr}
u^+_k(z)=z^k+\sum_{m=k+1}^\infty T(k,m;z)u^+_m(z), \quad k\in\bn_0, \quad z\in\bd_0.
\end{equation}
Conversely, each solution $u=(u_k)_{k\ge0}$ of $\eqref{voltr}$ solves $\eqref{3term}$.
\end{theorem}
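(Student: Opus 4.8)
The plan is to read the recurrence \eqref{3term} as a perturbation of the constant-coefficient equation whose solutions are $z^{\pm k}$, and to invert the unperturbed part with the Green kernel \eqref{green}. Writing $(\cl_0u)_k:=u_{k-1}+u_{k+1}-\l(z)u_k$, the relation \eqref{3term} is equivalent, for $k\in\bn$, to $(\cl_0u)_k=f_k$ with $f_k:=-b_ku_k+(1-a_kc_k)u_{k+1}$. The second identity in \eqref{grker} says precisely that $(\cl_0G(\cdot,m;z))_k=\d_{k,m}$, i.e. $G$ is a fundamental solution of $\cl_0$. The engine behind both implications is the single identity $(\cl_0\Sigma)_k=f_k$ for $\Sigma_k:=\sum_{m=k+1}^\infty G(k,m;z)f_m$, obtained by applying the three shifts defining $\cl_0$ termwise and invoking \eqref{grker}; the summation ranges fit together because $G(k,m;z)=0$ for $m\le k$, and the interchange is justified once the series converges absolutely.

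First I would record the bookkeeping that identifies $\sum_{m>k}T(k,m;z)u_m$ with $\Sigma_k$. Substituting \eqref{traker}, splitting into two sums, and shifting $m\mapsto m+1$ in the summand carrying $G(k,m-1;z)$—the new boundary term vanishing since $G(k,k;z)=0$—yields
\begin{equation*}
\sum_{m=k+1}^\infty T(k,m;z)u_m=\sum_{m=k+1}^\infty G(k,m;z)\bigl(-b_mu_m+(1-a_mc_m)u_{m+1}\bigr)=\Sigma_k.
\end{equation*}
For the converse implication this is all that is needed: if $u$ solves \eqref{voltr} then $u_k=z^k+\Sigma_k$, and applying $\cl_0$ gives $(\cl_0u)_k=0+f_k=f_k$ for $k\in\bn$, which is \eqref{3term}.

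For the direct implication I would start from the Jost solution $u^+$, which solves \eqref{3term} and obeys \eqref{jossol}, and form the residual $R_k:=u^+_k-z^k-\Sigma_k$ with $f_m$ now built from $u^+$. Since $(\cl_0u^+)_k=f_k$, $(\cl_0\Sigma)_k=f_k$, and $z^{k-1}+z^{k+1}-\l(z)z^k=0$, subtraction gives $(\cl_0R)_k=0$, so $R_k=Az^k+Bz^{-k}$ for some $z$-dependent constants (the characteristic roots $z,z^{-1}$ being distinct on $\bd_0$). The crux—and the one step that needs genuine analysis—is to show $A=B=0$ by letting $k\to\infty$. The difficulty is that the kernel \emph{grows}: $|G(k,m;z)|\le C(z)|z|^{-(m-k)}$ for $m>k$ and $z\in\bd_0$, so convergence of $\Sigma_k$ is not automatic. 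It is rescued by the two standing hypotheses: \eqref{jossol} furnishes $|u^+_m|\le C|z|^m$, and the trace-class condition \eqref{trclper} makes $\ep_m:=|b_m|+|1-a_mc_m|$ summable, whence $|G(k,m;z)f_m|\le C'|z|^k\ep_m$ and $|\Sigma_k|\le C'|z|^k\sum_{m>k}\ep_m$. Dividing by $z^k$ gives $z^{-k}R_k=A+Bz^{-2k}$, while $z^{-k}u^+_k\to1$ and $|z^{-k}\Sigma_k|\le C'\sum_{m>k}\ep_m\to0$ force $z^{-k}R_k\to0$. Since $|z|<1$ makes $|z^{-2k}|\to\infty$, this is possible only if $B=0$ and then $A=0$; hence $R\equiv0$, which is exactly \eqref{voltr}. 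Thus the whole weight of the proof rests on this asymptotic matching—balancing the exponentially growing Green kernel against the Jost decay and the $\ell^1$-smallness of the coefficients—while the converse is a purely formal consequence of the fundamental-solution property.
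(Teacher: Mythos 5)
Your proof is correct, and the direct implication follows a genuinely different route from the paper's. The paper never forms a residual: it multiplies the first identity in \eqref{grker} (differencing $G$ in the \emph{second} variable $m$) by $u^+_m$, multiplies \eqref{3term} by $G(k,m;z)$, subtracts, and sums over $k+1\le m\le N$. Everything telescopes into an exact finite-$N$ identity,
\begin{equation*}
u^+_k=G(k,N+1;z)u^+_N-a_Nc_NG(k,N;z)u^+_{N+1}+\sum_{m=k+1}^N T(k,m;z)u^+_m,
\end{equation*}
valid for \emph{every} solution of \eqref{3term}; then \eqref{jossol}, together with $a_Nc_N\to1$, shows the boundary terms tend to $z^k$, so \eqref{voltr} --- including the convergence of its series --- drops out of the limit for free. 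You instead prove absolute convergence of $\Sigma_k$ a priori (Jost decay $|u^+_m|\le C|z|^m$ against the $|z|^{-(m-k)}$ growth of $G$, times summability of your $\ep_m$), observe $(\cl_0R)_k=0$, and eliminate $Az^k+Bz^{-k}$ by asymptotic matching. Both are sound; the paper's summation by parts needs only $a_Nc_N\to1$ rather than summability of the coefficients at this stage, and gets convergence of the series as a by-product, while your version isolates the structural mechanism (fundamental solution plus uniqueness of solutions with prescribed asymptotics) and is arguably more transparent. Your converse is the paper's computation in different clothing: the paper works with $T$ directly, verifying $T(k-1,k)u_k=-b_ku_k$, $T(k-1,k+1)=\l(z)T(k,k+1)+(1-a_kc_k)$, and $T(k-1,m)+T(k+1,m)=\l(z)T(k,m)$, which is precisely your fundamental-solution identity after the substitution of $f$ for $u$.

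One step of your converse needs tightening. Rewriting $\sum_{m>k}T(k,m;z)u_m$ as $\Sigma_k=\sum_{m>k}G(k,m;z)f_m$ splits a single convergent series into two pieces and re-indexes one of them; this is legitimate only if the pieces converge separately (e.g.\ under absolute convergence), and in the converse direction that is not granted --- an arbitrary solution of \eqref{voltr} comes with no bounds on $u_m$, only with convergence of the combined $T$-series. Indeed, the partial sums satisfy $\sum_{m=k+1}^N T(k,m;z)u_m=\sum_{m=k+1}^{N-1}G(k,m;z)f_m-b_NG(k,N;z)u_N$, and the last term need not vanish in the limit. The paper's converse avoids this entirely: it only adds the convergent series from \eqref{voltr} at indices $k-1$ and $k+1$ and applies the kernel identities termwise, never splitting. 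This is a removable technicality (run your $\cl_0$ argument on the $T$-partial sums, or restrict to solutions enjoying the bounds of Theorem \ref{boundjost}), but as written it is the one point where your argument is weaker than the paper's.
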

\begin{proof}
We multiply the first relation \eqref{grker} for $G$ by $u^+_m$, 
relation \eqref{3term} for $u^+$ by $G(k,m)$, and subtract the later from the former
\begin{equation*}
\begin{split}
\Bigl[G(k,m+1)u^+_m-G(k,m)u^+_{m-1}\Bigr] &+\Bigl[-b_mG(k,m)+G(k,m-1)\Bigr]u^+_m \\
&-a_mc_mG(k,m)u^+_{m+1}=\d_{k,m}u^+_m. 
\end{split}
\end{equation*}
Next, we sum up over $m$ from $k+1$ to $N$, taking into account that \newline $G(k,k+1)=1$, $G(k,k)=0$
\begin{equation*}
\begin{split}
G(k,N+1)u^+_N &+\sum_{m=k+1}^N \Bigl[-b_mG(k,m)+G(k,m-1)\Bigr]u^+_m \\
&-\sum_{m=k}^N a_mc_mG(k,m)u^+_{m+1}=u^+_k,
\end{split}
\end{equation*}
or
\begin{equation*}
u^+_k=G(k,N+1)u^+_N-a_Nc_NG(k,N)u^+_{N+1}+\sum_{m=k+1}^N T(k,m)u^+_m.
\end{equation*}
The latter equality holds for arbitrary solutions of \eqref{3term}. If $u^+$ is the Jost solution, then, by \eqref{green} and \eqref{jossol},
$$ \lim_{N\to\infty} \Bigl[G(k,N+1)u^+_N-a_Nc_NG(k,N)u^+_{N+1}\Bigr]=z^k, $$
and \eqref{voltr} follows.

To prove the converse statement, let $u=(u_k)_{k\ge0}$ be any solution of \eqref{voltr}. Then
\begin{equation*}
\begin{split}
u_{k-1}+u_{k+1} &=\Bigl(z+\frac1z\Bigr)\,z^k+T(k-1,k)u_k+T(k-1,k+1)u_{k+1} \\
&+\sum_{m=k+2}^\infty\Bigl[T(k-1,m)+T(k+1,m)\Bigr]u_m.
\end{split}
\end{equation*}
But
\begin{equation*}
\begin{split}
T(k-1,k)u_k &=-b_ku_k, \\
T(k-1,k+1)u_{k+1} &=\Bigl[-b_{k+1}G(k-1,k+1)+(1-a_kc_k)G(k-1,k)\Bigr]u_{k+1} \\
&=-\Bigl(z+\frac1z\Bigr)b_{k+1}u_{k+1}+(1-a_kc_k)u_{k+1} \\
&=\Bigl(z+\frac1z\Bigr)T(k,k+1)u_{k+1}+(1-a_kc_k)u_{k+1}, \\
T(k-1,m)+T(k+1,m) &=\Bigl(z+\frac1z\Bigr)T(k,m).
\end{split}
\end{equation*}
Finally,
\begin{equation*} 
u_{k-1}+u_{k+1}=-b_ku_k+(1-a_kc_k)u_{k+1}+\Bigl(z+\frac1z\Bigr)\,\Bigl(z^k+\sum_{m=k+1}^\infty T(k,m)u_m\Bigr),
\end{equation*}
which is \eqref{3term}. The proof is complete.
\end{proof}

The further study of the Volterra equation \eqref{voltr} relies upon the modified kernel
\begin{equation}\label{newker}
\widetilde T(k,m;z):=T(k,m;z)\,z^{m-k}.
\end{equation}
It is easy to verify that $\widetilde T(k,m;\cdot)$ are polynomials of $z$. Indeed,
\begin{equation*}
\begin{split}
z^{m-k}G(k,m;z) &=z^{m-k}\,\frac{z^{m-k}-z^{k-m}}{z-z^{-1}}=z\,\frac{z^{2(m-k)}-1}{z^2-1}\,, \quad m\ge k; \\
z^{m-k}G(k,m-1;z) &=z^{m-k}\,\frac{z^{m-k-1}-z^{k-m+1}}{z-z^{-1}}=z^2\,\frac{z^{2(m-k-1)}-1}{z^2-1}\,, \quad m\ge k+1, 
\end{split}
\end{equation*}
as claimed. The bounds for the kernel $\widetilde T$ follow from the latter relations
\begin{equation*}
\begin{split}
\bigl|z^{m-k}G(k,m;z)\bigr| &\le |z|\cdot\min\Bigl\{(m-k)_+, \ \frac2{|1-z^2|}\Bigr\}, \quad (a)_+:=\max(a,0); \\
\bigl|z^{m-k}G(k,m-1;z)\bigr| &\le |z|^2\cdot\min\Bigl\{(m-k-1)_+, \ \frac2{|1-z^2|}\Bigr\},
\end{split}
\end{equation*}
and so
\begin{equation}\label{bounker}
\begin{split}
\bigl|\widetilde T(k,m;z)\bigr| &\le \d_m |z|\min\Bigl\{(m-k)_+, \ \frac2{|1-z^2|}\Bigr\}, \quad z\in\ovl{\bd}, \\
\d_m &:=|b_m|+|1-a_{m-1}c_{m-1}|, \qquad a_0=c_0=1.
\end{split}
\end{equation}
In particular,
\begin{equation}\label{bounker1}
\bigl|\widetilde T(k,m;z)\bigr|\le |\o(z)|\d_m, \quad \o(z):=\frac{2z}{1-z^2}, \quad z\in\bd_1:=\ovl\bd\bsl\{\pm1\}.
\end{equation}

\begin{theorem}\label{boundjost}
$(i)$. Assume that
\begin{equation}\label{ell1}
\dd=\sum_{n=1}^\infty \d_n<\infty.
\end{equation}
Then the equation \eqref{voltr} has a unique solution $u^+=(u^+_k)_{k\ge0}$ so that $u^+_k$ are analytic in $\bd$, continuous in $\bd_1$, and
for $k\in\bn_0$ and $z\in\bd_1$
\begin{equation}\label{bouell}
|u^+_k(z)-z^k|\le |z|^{k}\left(e^{|\o(z)|s_0(k)}-1\right), \quad s_0(k):=\sum_{n=k+1}^\infty \d_n.
\end{equation}

$(ii)$. Assume that $\eqref{mom1}$ holds
\begin{equation*}
\dd_1=\sum_{n=1}^\infty n\d_n<\infty.
\end{equation*}
Then \eqref{voltr} has a unique solution $u^+=(u^+_k)_{k\ge0}$ so that $u^+_k$ are analytic in $\bd$, continuous in $\ovl\bd$, and
for $k\in\bn_0$ and $z\in\ovl\bd$
\begin{equation}\label{boumom}
|u^+_k(z)-z^k|\le |z|^{k}\left(e^{|z|s_1(k)}-1\right), \quad s_1(k):=\sum_{n=k+1}^\infty n\d_n.
\end{equation}
\end{theorem}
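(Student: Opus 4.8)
The plan is to run the method of successive approximations on the Volterra equation \eqref{voltr} after passing to the normalized unknown $v_k(z):=z^{-k}u^+_k(z)$. Substituting $u^+_m=z^m v_m$ into \eqref{voltr} and recalling the definition \eqref{newker} of the modified kernel turns \eqref{voltr} into the equivalent equation
\begin{equation*}
v_k(z)=1+\sum_{m=k+1}^\infty \widetilde T(k,m;z)\,v_m(z), \quad k\in\bn_0,
\end{equation*}
whose kernel $\widetilde T$ is, by the computation preceding \eqref{bounker}, a polynomial in $z$ subject to the bounds \eqref{bounker}--\eqref{bounker1}. I would then introduce the iterates $v_k^{[0]}:=1$ and $v_k^{[n+1]}:=1+\sum_{m>k}\widetilde T(k,m)v_m^{[n]}$, and track the increments $\eta_k^{[n]}:=v_k^{[n+1]}-v_k^{[n]}$, which obey the homogeneous recursion $\eta_k^{[n]}=\sum_{m>k}\widetilde T(k,m)\eta_m^{[n-1]}$ with $\eta_k^{[0]}=\sum_{m>k}\widetilde T(k,m)$.

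For part $(i)$ I would feed in the bound \eqref{bounker1}, $|\widetilde T(k,m;z)|\le|\o(z)|\d_m$, which holds on $\bd_1$. The heart of the matter is the telescoping inequality
\begin{equation*}
\sum_{m=k+1}^\infty \d_m\,s_0(m)^{p}\le \frac{s_0(k)^{p+1}}{p+1}, \quad p\ge0,
\end{equation*}
which follows from $s_0(m-1)-s_0(m)=\d_m$ together with the elementary convexity estimate $s_0(m-1)^{p+1}-s_0(m)^{p+1}\ge(p+1)s_0(m)^p\d_m$ and a summation by parts, using $s_0(m)\to0$. Inserting this into the recursion and arguing by induction on $n$ produces the factorial decay
\begin{equation*}
|\eta_k^{[n]}(z)|\le\frac{\bigl(|\o(z)|\,s_0(k)\bigr)^{n+1}}{(n+1)!}, \quad z\in\bd_1.
\end{equation*}
Summing the series $v_k=1+\sum_{n\ge0}\eta_k^{[n]}$ then delivers both the convergence of the approximations and the bound $|v_k-1|\le e^{|\o(z)|s_0(k)}-1$, which is exactly \eqref{bouell} after multiplying through by $|z|^k$.

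Part $(ii)$ is entirely parallel: here I would use instead the first alternative in the minimum of \eqref{bounker}, namely $|\widetilde T(k,m;z)|\le\d_m|z|(m-k)_+\le|z|\,(m\d_m)$ on $\ovl\bd$. This is the analogue of \eqref{bounker1} with $\o(z)$ replaced by $|z|\le1$ and the weight $\d_m$ replaced by $m\d_m$, so the same telescoping inequality with $s_1$ in place of $s_0$ (legitimate since $\dd_1<\infty$ forces $s_1(m)\to0$) gives $|\eta_k^{[n]}|\le(|z|\,s_1(k))^{n+1}/(n+1)!$ and hence \eqref{boumom}.

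Finally, the qualitative assertions follow from the quantitative ones. Each $v_k^{[n]}$ is analytic in $\bd$ because the estimate $|\widetilde T(k,m;\cdot)|\le|\o|\d_m$ makes the $m$-sum converge locally uniformly on $\bd$ (where $\o$ is bounded on compacta) while each $\widetilde T(k,m;\cdot)$ is a polynomial; the locally uniform convergence of $\sum_n\eta_k^{[n]}$ transfers analyticity to $v_k$, and thus to $u^+_k=z^k v_k$. Continuity up to $\bd_1$ in case $(i)$ and up to all of $\ovl\bd$ in case $(ii)$ results from the respective kernel bounds being uniform on those sets. Uniqueness is the standard Volterra argument: the difference $w$ of two solutions of the $v$-equation satisfies the homogeneous equation, so if $|w_m|\le M$ then iterating yields $|w_k|\le M\,(|\o|s_0(k))^n/n!\to0$, whence $w\equiv0$. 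The main obstacle is the telescoping/convexity estimate that manufactures the factorial denominators; once it is in place, the rest is bookkeeping.
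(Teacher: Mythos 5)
Your proposal is correct and follows essentially the same route as the paper: the same normalization $v_k=z^{-k}u^+_k$ (the paper writes $f_k=v_k-1$), the same successive-approximation scheme whose increments $\eta_k^{[n]}$ coincide with the paper's terms $f_{k,n+1}$, the same telescoping/convexity inequality $\sum_{m>k}\d_m s_0^p(m)\le s_0^{p+1}(k)/(p+1)$ producing the factorial bounds, and the same alternative kernel bound $|\widetilde T(k,m;z)|\le |z|\,m\d_m$ for part (ii). The only deviation is cosmetic, in the uniqueness step: you iterate the homogeneous equation assuming the difference of solutions is bounded (a legitimate assumption, since any solution whose series converges absolutely satisfies $|v_k|\le 1+|\o(z)|\sum_m\d_m|v_m|$ uniformly in $k$), whereas the paper runs an infinite-product argument on the tails $q_k$; both are standard Volterra uniqueness arguments.
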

\begin{proof}
It is advisable to introduce new variables in \eqref{voltr}
$$ f_j(z):=u^+_j(z)z^{-j}-1, \qquad j\in\bn_0, $$
so the equation \eqref{voltr} turns into
\begin{equation}\label{voltr1}
\begin{split}
f_k(z) &=g_k(z)+\sum_{m=k+1}^\infty \widetilde T(k,m;z)f_m(z), \\ 
g_k(z) &:=\sum_{m=k+1}^\infty \widetilde T(k,m;z), \quad k\in\bn_0, \quad z\in\bd_1.
\end{split}
\end{equation}
It is clear from \eqref{bounker1} and the assumption \eqref{ell1} that the latter series converges absolutely and uniformly on compact
subsets of $\bd_1$ and so represents an analytic function in $\bd$. Moreover,
$$ |g_k(z)|\le |\o(z)|s_0(k), \qquad k\in\bn_0, \quad z\in\bd_1. $$

We are going to solve \eqref{voltr1} by using the successive approximations method. Let
$$ f_{k,1}(z):=g_k(z), \quad f_{k,j+1}(z):=\sum_{m=k+1}^\infty \widetilde T(k,m;z)f_{m,j}(z), \quad j\in\bn. $$
It is easy to see, by induction, that
\begin{equation}\label{sucap1}
|f_{k,p}(z)|\le \frac{\bigl(|\o(z)|s_0(k)\bigr)^p}{p!}\,, \qquad p\in\bn. 
\end{equation}
Indeed, once the bound is true for $p=1$, we assume that it holds for $p=1,2,\ldots,j$ and $k\in\bn_0$. Then
$$ |f_{k,j+1}(z)|\le|\o(z)|\,\sum_{m=k+1}^\infty \d_m|f_{m,j}(z)|\le\frac{|\o(z)|^{j+1}}{j!}\,\sum_{m=k+1}^\infty \d_m s_0^j(m). $$
The elementary inequality $(a+b)^{j+1}-a^{j+1}\ge(j+1)ba^j$, $a,b\ge0$ gives with $a=s_0(m)$, $b=\d_m$, $a+b=s_0(m-1)$
$$ \sum_{m=k+1}^\infty \d_m s_0^j(m)\le \frac1{j+1}\,\sum_{m=k+1}^\infty \bigl(s_0^{j+1}(m-1)-s_0^{j+1}(m)\bigr)=\frac{s_0^{j+1}(k)}{j+1}\,, $$
and so
$$ |f_{k,j+1}(z)|\le \frac{|\o(z)|^{j+1} s_0^{j+1}(k)}{j!(j+1)}=\frac{\bigl(|\o(z)|s_0(k)\bigr)^{j+1}}{(j+1)!}\,, $$
as claimed.

Hence, the series
\begin{equation*}
f_k:=\sum_{j=1}^\infty f_{k,j}(z)
\end{equation*}
converges absolutely and uniformly on compact subsets of $\bd_1$ and represents an analytic in $\bd$ function, continuous in $\bd_1$.
It satisfies \eqref{voltr1}
\begin{equation*}
\begin{split}
f_k(z)-g_k(z) &=f_k(z)-f_{k,1}(z)=\sum_{j=1}^\infty f_{k,j+1}(z)=\sum_{j=1}^\infty \sum_{m=k+1}^\infty \widetilde T(k,m;z)f_{m,j}(z) \\
&=\sum_{m=k+1}^\infty \widetilde T(k,m;z)f_{m}(z).
\end{split}
\end{equation*}
This solution admits the bound, see \eqref{sucap1},
\begin{equation*}
|f_k(z)|\le\sum_{j=1}^\infty |f_{k,j}(z)|\le \sum_{j=1}^\infty \frac{\bigl(|\o(z)|s_0(k)\bigr)^j}{j!}=e^{|\o(z)|s_0(k)}-1,
\end{equation*}
which is \eqref{bouell}.

As far as uniqueness goes, suppose that there are two solutions of \eqref{voltr}
$$ u^+=(u^+_k)_{k\ge0}, \qquad v^+=(v^+_k)_{k\ge0}. $$ 
Assume further that $z\not=0$. By \eqref{bounker1}, we have
\begin{equation}\label{uniq1} 
|u^+_k(z)-v^+_k(z)|\le|\o(z)|\,\sum_{m\ge k+1}\d_m|u^+_m(z)-v^+_m(z)|=:q_k(z). 
\end{equation}
Clearly, $q_k\searrow 0$ as $k\to\infty$.

We fix $z$ and assume first that $q_p>0$ for all $p\in\bn_0$. By \eqref{uniq1},
$$ \frac{q_{p-1}-q_p}{q_p}=\frac{\d_p|u^+_p(z)-v^+_p(z)||\o(z)|}{q_p}\le |\o(z)|\d_p, \quad q_p\le q_N\,\prod_{j=p+1}^N\bigl(1+|\o(z)|\d_j\bigr). $$
The latter product converges as $N\to\infty$, so $q_p=0$ in contradiction with our assumption.

Next, let $l\in\bn_0$ exist so that $q_l=0$. Then, in view of monotonicity, $q_{l+1}=q_{l+2}=\ldots=0$, and, in the opposite way, successively,
$$ |u^+_l-v^+_l|=0 \ \Rightarrow \ q_{l-1}=0 \ \Rightarrow \ |u^+_{l-1}-v^+_{l-1}|=0 \ \Rightarrow \ q_{l-2}=0 \ldots \ \Rightarrow \ |u^+_{0}-v^+_{0}|=0, $$
so the uniqueness follows.

(ii). The proof goes along the same line of reasoning with the auxiliary bounds for $z\in\ovl\bd$
$$ \Bigl|\widetilde T(k,m;z)\Bigr|\le |z|m\d_m, \quad |g_k(z)|\le |z|s_1(k), \quad |f_{k,p}(z)|\le \frac{\bigl(|z|s_1(k)\bigr)^p}{p!}\,. $$
The proof is complete.
\end{proof}

\section{Perturbation determinant and the Lieb--Thirring inequality}
\label{s2}

Under our main assumption \eqref{trclper}, the perturbation determinant $L(\l,J)$ \eqref{perdeter}  is a well-defined analytic function on $\bd$.

\begin{theorem}
Let $J-J_0\in\cs_1$. Then the bound holds
\begin{equation}\label{bo1}
\log|L(\l(z),J)|\le |\o(z)|\dd=\frac{2|z|\dd}{|1-z||1+z|}, \qquad z\in\bd.
\end{equation}
Under assumption $\eqref{mom1}$,
\begin{equation}\label{bo2}
\log|L(\l(z),J)|\le|z|\dd_1, \qquad z\in\ovl\bd.
\end{equation}
\end{theorem}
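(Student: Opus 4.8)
The plan is to reduce both bounds to Theorem \ref{boundjost} by exploiting the coincidence of the perturbation determinant with the Jost function. First I would invoke the non-self-adjoint version of the Killip--Simon identity \cite[Theorem 2.16]{kisi03}, cited in the introduction, which asserts that $L(\l(z),J)$ equals the Jost function, i.e. the zeroth coordinate $u^+_0(z)$ of the Jost solution produced by Theorem \ref{boundjost}. Since the hypothesis $J-J_0\in\cs_1$ is exactly the assumption \eqref{ell1}, namely $\dd<\infty$, that theorem guarantees a unique Jost solution, analytic in $\bd$ and continuous on $\bd_1$, satisfying the estimate \eqref{bouell}. Thus the whole statement becomes a direct readout of an already-established successive-approximation bound.

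The core computation is then immediate. Setting $k=0$ in \eqref{bouell} and noting $z^0=1$ together with $s_0(0)=\sum_{n\ge1}\d_n=\dd$, one obtains
$$ |u^+_0(z)-1|\le e^{|\o(z)|\dd}-1, \qquad z\in\bd. $$
By the triangle inequality $|u^+_0(z)|\le 1+\bigl(e^{|\o(z)|\dd}-1\bigr)=e^{|\o(z)|\dd}$, so taking logarithms yields \eqref{bo1}; the explicit form $|\o(z)|=2|z|/(|1-z||1+z|)$ is just $|1-z^2|=|1-z|\,|1+z|$ applied to the definition of $\o$ in \eqref{bounker1}. No domain issue arises, because the open disk $\bd$ sits inside $\bd_1=\ovl\bd\bsl\{\pm1\}$, where the bound holds, and $\o$ is regular throughout $\bd$.

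For the second inequality, under the stronger moment hypothesis \eqref{mom1}, I would run the identical argument with part $(ii)$ of Theorem \ref{boundjost}. Now $s_1(0)=\sum_{n\ge1}n\d_n=\dd_1$, and \eqref{boumom} with $k=0$ gives $|u^+_0(z)-1|\le e^{|z|\dd_1}-1$ on all of $\ovl\bd$; hence $\log|L(\l(z),J)|=\log|u^+_0(z)|\le|z|\dd_1$, which is \eqref{bo2}.

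The only genuinely nontrivial ingredient is the identification $L(\l(z),J)=u^+_0(z)$: everything downstream is a one-line consequence of the bounds already proved. The point to handle with care is precisely this identity, and in particular the emphasis (made in the introduction) that its derivation is purely algebraic and does not rely on self-adjointness of $J$; since both sides are analytic on $\bd$, once the identity is verified it propagates to all of $\bd$ and the two logarithmic estimates follow at once.
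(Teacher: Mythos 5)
Your proposal is correct and follows essentially the same route as the paper: identify $L(\l(z),J)$ with the Jost function $u^+_0(z)$ via the non-self-adjoint Killip--Simon computation, then read off \eqref{bo1} and \eqref{bo2} from \eqref{bouell} and \eqref{boumom} with $k=0$. One small imprecision: $J-J_0\in\cs_1$ is not \emph{exactly} the assumption \eqref{ell1} (e.g.\ $a_n=2$, $c_n=1/2$ gives $\dd$-contribution zero but is not trace class); rather $\cs_1$ \emph{implies} $\dd<\infty$ by \eqref{ddnorm}, which is all your argument needs, while the trace-class hypothesis itself is what makes the determinant $L$ well defined in the first place.
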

\begin{proof}
By the non-self-adjoint version of \cite[Section 2]{kisi03} (the calculation there is algebraic and so immediately extends to the non-self-djoint case),
the Jost solution $u^+$ of \eqref{3term} equals
$$ u^+_k(z)=z^k L(\l(z),J^{(k)}), \qquad k\in\bn_0, $$
where $J^{(k)}$, the $k$-stripped Jacobi matrix, is obtained from $J$ by dropping the first $k$ rows and columns. So, \eqref{bo1} and \eqref{bo2}
follow directly from \eqref{bouell} and \eqref{boumom}, respectively, with $k=0$.
\end{proof}

We are now ready for

{\it Proof of Theorem \ref{mainth}}. 

According to \cite[Theorem 4]{haka11}, for each $\ep\in(0,1)$ 
there is a constant $C(\ep)>0$ so that the Blaschke-type condition holds for the zero set (divisor) 
$Z(L)$, $L$ in \eqref{bo1}, $L(0)=1$ $$ \sum_{\z\in Z(L)} (1-|\z|)\frac{|\z^2-1|^\ep}{|\z|^\ep}\le C(\ep)\dd, $$
(each zero is taken with its multiplicity). The latter inequality turns into \eqref{mainlt}, when we go over to the Zhukovsky images,
taking into account the distortion for the Zhukovsky function \cite[Lemma 7]{haka11}
\begin{equation*}
\frac12\,\frac{|1-z^2|(1-|z|)}{|z|}\le \dist(\l,[-2,2])\le \frac{1+\sqrt2}2\,\frac{|1-z^2|(1-|z|)}{|z|}. 
\end{equation*} 

For the discrete Schr\"odinger operators $(a_n=c_n\equiv 1)$, one has 
$$ \dd=\sum_{n=1}^\infty |b_n|=\|J-J_0\|_1, $$ 
and \eqref{mainlt1} follows. The proof is complete.

In view of \eqref{ddnorm}, it might be worth comparing the key inequality \eqref{bo1} with
$$ \log|L(z)|\le \frac{C_{abs}}{|1-z^2|^2}\,(\|J-J_0\|_1+\|J-J_0\|_1^2), \quad z\in\bd, $$
the result obtained in \cite[Theorem 2.8]{kisi03} for the self-adjoint case.

There is yet another consequence of Theorem \ref{boundjost}, (i), which concerns embedded eigenvalues of the Jacobi operator $J$. The result
is likely to be known (cf. \cite{ibst19} for two-sided discrete Schr\"odinger operators), so we briefly outline its proof.

\begin{corollary}
Let $J-J_0\in\cs_1$. Then the operator $J$ has no embedded eigenvalues, i.e., eigenvalues on $(-2,2)$.
\end{corollary}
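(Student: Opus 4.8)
The plan is to prove the sharper statement that for \emph{every} $\lambda_0\in(-2,2)$ the three-term recurrence \eqref{3term} admits no nonzero $\ell^2(\bn)$ solution. Since, as recalled in the Introduction, an eigenvector of $J$ gives rise to a solution of \eqref{3term} (and that solution will be seen to be square-summable), this at once rules out eigenvalues embedded in $(-2,2)$.

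First I would reduce the claim to a statement purely about solutions. Starting from a hypothetical $\ell^2$ eigenvector $v$, the associated sequence $u_k=v_k/(a_1\cdots a_{k-1})$ solves \eqref{3term}; because $J-J_0\in\cs_1$ forces $\sum_n|1-a_n|<\infty$, the partial products $a_1\cdots a_{k-1}$ stay bounded above and bounded away from zero, so $u$ is square-summable exactly when $v$ is, and $u$ is a nonzero element of $\ell^2$.

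Next I would exhibit two explicit, linearly independent solutions at the spectral parameter $\lambda_0$. Writing $\lambda_0=\lambda(z_0)$ with $z_0=e^{i\theta_0}$, $\theta_0\in(0,\pi)$, we have $z_0\in\bd_1$ and $z_0\neq\pm1$, and the two points $z_0$ and $1/z_0=\bar z_0$ share the spectral parameter, $\lambda(z_0)=\lambda(\bar z_0)=\lambda_0$. By Theorem \ref{boundjost}(i) both Jost solutions $u^+(z_0)$ and $u^+(\bar z_0)$ are defined and solve \eqref{3term} with $\lambda=\lambda_0$; their continuity up to the unit circle is legitimate since $\omega(z_0),\omega(\bar z_0)$ are finite for $z_0\neq\pm1$. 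The bound \eqref{bouell} gives $u^+_k(z_0)=z_0^k(1+f_k)$ and $u^+_k(\bar z_0)=\bar z_0^{\,k}(1+g_k)$ with $f_k,g_k\to0$, because $s_0(k)\to0$. The mismatched leading terms $z_0^k$ and $z_0^{-k}$, together with $z_0\neq z_0^{-1}$, force linear independence, so these two solutions span the two-dimensional solution space and $u=A\,u^+(z_0)+B\,u^+(\bar z_0)$ for suitable constants $A,B$.

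Finally I would rule out square-summability of this combination. Since $|z_0^k|=1$ and the corrections vanish, $|u_k|^2=|A z_0^k+B z_0^{-k}|^2+o(1)$, and because $z_0^2\neq1$ the cross term proportional to $z_0^{2k}$ averages to zero, so the Cesàro means $\frac1N\sum_{k=1}^N|u_k|^2$ converge to $|A|^2+|B|^2$. If $u\in\ell^2$ this limit must vanish, whence $A=B=0$ and $u\equiv0$, contradicting $u\neq0$. I expect the one genuinely delicate point to be this last step — certifying that the bounded oscillatory principal part $A z_0^k+B z_0^{-k}$ can never be square-summable and that the decaying corrections from \eqref{bouell} do not disturb the Cesàro computation; the remainder is a direct reading of Theorem \ref{boundjost}(i) at the two conjugate boundary points $z_0$ and $\bar z_0$.
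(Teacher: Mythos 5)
Your proposal is correct, but the mechanism of the final contradiction differs from the paper's. Both arguments share the same skeleton: reduce an embedded eigenvalue $\lambda_0=2\cos\theta_0$, $\theta_0\in(0,\pi)$, to the existence of a nonzero $\ell^2$ solution of \eqref{3term} at $z_0=e^{i\theta_0}$ (the paper does this via the similar operator $\widehat J=J(\{1\},\{b_j\},\{a_jc_j\})$ of Remark \ref{simil}, you via the eigenvector--solution correspondence of the Introduction together with convergence of the products $a_1\cdots a_{k-1}$ --- the same underlying fact, since $\sum_n|1-a_n|<\infty$ and $a_n\neq0$), and both hinge on Theorem \ref{boundjost}(i) giving the Jost solution on the circle with $|u_k^+(e^{i\theta_0})|=1+o(1)$, i.e.\ \eqref{joscir}. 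Where the paper takes the \emph{single} Jost solution $u^+(z_0)$ and pairs it with the hypothetical $\ell^2$ solution via the ($k$-independent) Wronskian --- boundedness of $u^+$ and decay of the eigenvector force the Wronskian to vanish, hence linear dependence, contradicting \eqref{joscir} --- you instead build a \emph{basis} of the two-dimensional solution space from the two Jost solutions at the conjugate points $z_0$ and $\bar z_0$ (legitimate, since $\lambda(z_0)=\lambda(\bar z_0)$ and both lie in $\bd_1$), expand the hypothetical solution as $Au^+(z_0)+Bu^+(\bar z_0)$, and kill $A,B$ by a Ces\`aro-mean computation. Your route is sound; two small points deserve to be written out: the linear independence claim needs the observation that $\alpha z_0^{k}(1+f_k)+\beta z_0^{-k}(1+g_k)\equiv0$ would force $z_0^{2k}$ to converge, impossible when $z_0^2\neq1$; and the Ces\`aro step can be shortened, since $u\in\ell^2$ already gives $u_k\to0$, hence $Az_0^{2k}+B\to0$, which by the same non-convergence of $z_0^{2k}$ yields $A=B=0$. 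What each approach buys: the paper's Wronskian argument is shorter and needs no spanning/dimension count; yours avoids the Wronskian constancy altogether and makes explicit the scattering-type asymptotics $u_k=Az_0^k+Bz_0^{-k}+o(1)$ of an arbitrary solution, which is the conceptual reason no nonzero solution can be square-summable.
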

\begin{proof}
Assume on the contrary, that $\l=2\cos\th$, $0<\th<\pi$, is the eigenvalue of $J$. In this case $\l$ is also the eigenvalue for the modified
Jacobi operator $\widehat J:=J(\{1\},\{b_j\},\{a_jc_j\})$, which is similar to $J$, see Remark \ref{simil}. 
Denote by $h=(h_k)_{k\ge1}$ the corresponding eigenvector, so we have an $\ell^2$-solution
$h'=(0,h)$ of \eqref{3term} with $z=e^{i\th}$. 

On the other hand, the Jost solution is known to be continuous in $\bd_1$, so the second solution $u^+=(u^+_k)_{k\ge0}$ of \eqref{3term} comes up.
It is clear from \eqref{bouell} that
\begin{equation}\label{joscir}
|u^+_k(e^{i\th})|=1+o(1), \qquad k\to\infty.
\end{equation}

The Wronskian of these two solutions
$$ W(u^+,h')=\prod_{j=k+1}^\infty (a_jc_j)^{-1}\,(u^+_kh_{k+1}-u^+_{k+1}h_k) $$
at the point $e^{i\th}$ is $k$-independent, and as $h\in\ell^2$, and $u^+$ is bounded, we see that $W(u^+,h')\equiv 0$. Hence,
$u^+$ and $h$ are to be linearly dependent, that contradicts \eqref{joscir} and $h\in\ell^2$.
\end{proof}

{\it Proof of Theorem \ref{mainth2}}.

The bound \eqref{bouell} with $k=0$ means that $L\not=0$ in $\bd$ as long as 
\begin{equation*} 
|\o(z)|\dd<\log 2, \qquad \frac{|z|}{|1-z^2|}<\frac{\log 2}{2\dd}\,. 
\end{equation*}
Since
$$ |\l^2-4|=\left|\frac{1-z^2}{z}\right|^2, $$
we come to \eqref{casov}.

Under assumption $\dd_1<\log 2$, the bound \eqref{boumom} with $k=0$ implies $L\not=0$ on $\bd$, so
$\s_d(J)$ is missing, as claimed. The proof is complete.

\begin{remark}
The spectral enclosures are normally derived from the Birman--Schwinger principle. Precisely,
$$ \l(z)\in\s_d(J) \ \Rightarrow \ \|K(z)\|\le1, $$
$K$ is the Birman--Schwinger operator. In our case such approach leads to the inclusion
\begin{equation*}
\s_d(J)\subset \bigl\{\l\in\bc\backslash [-2,2]: \ |\l^2-4|\le 36^2\|J-J_0\|_1^2\bigr\}. 
\end{equation*}

The approach based on the Jost functions for semi-infinite discrete Schr\"odinger operators gives
\begin{equation*}
\s_d(J)\subset \Bigl\{\l\in\bc\backslash [-2,2]: \ |\l^2-4|\le \frac4{(\log 2)^2}\,\|J-J_0\|_1^2\Bigr\}. 
\end{equation*}

Note that for two-sided discrete Schr\"odinger operators the sharp oval which contains the discrete spectrum is known \cite{ibst19}
\begin{equation*}
\s_d(J)\subset \bigl\{\l\in\bc\backslash [-2,2]: \ |\l^2-4|\le \|J-J_0\|_1^2\bigr\}. 
\end{equation*}
\end{remark}

\section{Semi-infinite discrete Schr\"odinger operators with rectangular potentials}
\label{s3}

We begin with a slight refinement of \cite[Lemma 4]{bostam20}.

\begin{proposition}\label{locspec}
The discrete spectrum of the operator $J_{n,h}$ $\eqref{recpot}$ is contained in the rectangle
\begin{equation}\label{losp}
\s_d(J_{n,h})\subset \{\l\in\bc: \ |\re\l|<2, \ 0<\im\l<h\}.
\end{equation}
\end{proposition}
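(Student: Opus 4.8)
The plan is to read off the location of an eigenvalue directly from the quadratic form of the operator, exploiting the decomposition $J_{n,h}=J_0+ih\cp_n$ recorded after \eqref{recpot}. Let $\l\in\s_d(J_{n,h})$. Since a discrete-spectrum point lies in the point spectrum, there is a genuine eigenvector $v=(v_k)_{k\ge1}\in\ell^2(\bn)$, which I normalize by $\|v\|=1$. Pairing $J_{n,h}v=\l v$ against $v$ in the inner product $\langle u,w\rangle=\sum_k u_k\overline{w_k}$ gives
\[
\l=\langle J_{n,h}v,v\rangle=\langle J_0v,v\rangle+ih\langle\cp_n v,v\rangle .
\]
Here $J_0$ is self-adjoint, so $\langle J_0v,v\rangle\in\br$, while $\cp_n$ is an orthogonal projection, so $\langle\cp_n v,v\rangle=\|\cp_n v\|^2=\sum_{k=1}^n|v_k|^2\ge0$. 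Consequently
\[
\re\l=\langle J_0v,v\rangle,\qquad \im\l=h\sum_{k=1}^n|v_k|^2,
\]
and all four bounds in \eqref{losp} will be extracted from these two identities.

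For the real part, $J_0$ is bounded self-adjoint with $\s(J_0)=[-2,2]$, so the numerical range yields $|\re\l|\le2$ at once. Strictness is the spectral-theoretic point: equality $\langle J_0v,v\rangle=\pm2\|v\|^2$ would force $v$ into the range of the spectral projection of $J_0$ onto $\{\pm2\}$; but $J_0$ has purely absolutely continuous spectrum and hence no eigenvalues, so that projection is trivial and $|\re\l|<2$.

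For the imaginary part the two strict bounds come from the three-term recurrence, using $a_j=c_j\equiv1$. The eigenvector solves $v_{k+1}=(\l-b_k)v_k-v_{k-1}$ with the Dirichlet condition $v_0=0$, so $v$ is completely determined by $v_1$. If $v_1=0$ the recurrence propagates forward to $v\equiv0$; hence $v_1\ne0$, so $\sum_{k=1}^n|v_k|^2\ge|v_1|^2>0$ and $\im\l>0$. For the upper bound $\im\l<h$ I need the tail $\sum_{k=n+1}^\infty|v_k|^2$ to be strictly positive. If it vanished, then in particular $v_{n+1}=v_{n+2}=0$; since the same recurrence also determines $v_{k-1}$ from $v_k,v_{k+1}$, two consecutive zeros propagate backward and again force $v\equiv0$, a contradiction. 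Therefore $\sum_{k=1}^n|v_k|^2<\|v\|^2=1$, giving $\im\l<h$.

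The numerical-form bookkeeping is routine; the only delicate points are the strict inequalities, and the main obstacle is precisely ruling out the two boundary cases. These are exactly where the semi-infinite structure enters: the boundary condition $v_0=0$ makes $v_1\ne0$ (hence $\im\l>0$), and the fact that a second-order recurrence with two consecutive vanishing entries has only the trivial solution forbids a compactly supported eigenvector (hence $\im\l<h$). Both are genuinely one-sided phenomena with no two-sided analogue, which is consistent with the remark in the paper that the resulting location is a strict refinement of \cite[Lemma 4]{bostam20}.
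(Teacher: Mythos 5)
Your proof is correct, and it launches from the same identity as the paper's: for a normalized eigenvector, $\l=\langle J_0 g,g\rangle+ih\langle\cp_n g,g\rangle$, so that $\re\l=\langle J_0g,g\rangle$ and $\im\l=h\|\cp_n g\|^2$. The divergence is in how the three strict inequalities are settled, and notably you and the paper swap tools. For $|\re\l|<2$ the paper stays elementary: it expands $\langle J_0g,g\rangle=2\sum_{j\ge1}\re(g_{j-1}\ovl{g_j})$, applies $2|g_{j-1}g_j|\le|g_{j-1}|^2+|g_j|^2$, and observes that equality throughout would force $|g_{j-1}|=|g_j|$ for all $j$, impossible for a nonzero $\ell^2$ vector; you instead use the spectral theorem together with the absence of eigenvalues of $J_0$ at $\pm2$. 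For $0<\im\l<h$ the paper takes the spectral route --- $\im\l=0$ (resp.\ $\im\l=h$) forces $\cp_n g=0$ (resp.\ $\cp_n g=g$), hence $g$ would be an eigenvector of $J_0$, which does not exist --- while you run the three-term recurrence: the Dirichlet condition $v_0=0$ gives $v_1\ne0$ by forward propagation, and two consecutive zeros $v_{n+1}=v_{n+2}=0$ propagate backward to $v\equiv0$. Both routes are sound. Your recurrence argument has the merit of being entirely self-contained (no spectral facts about $J_0$ are needed for the imaginary part), whereas the paper's Cauchy--Schwarz argument for the real part avoids the spectral machinery you invoke; in both proofs the one unproved ingredient, that $J_0$ has no eigenvalues, is standard and can itself be checked by your propagation argument, since $J_0v=\l v$ with $v_0=0$ determines $v$ up to the scalar $v_1$, and no nontrivial such solution lies in $\ell^2$. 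One small caveat: your closing claim that $\im\l<h$ reflects a purely one-sided phenomenon overstates things --- the ``no two consecutive zeros'' argument excludes compactly supported eigenvectors for two-sided Jacobi matrices just as well; only the $v_0=0$ argument for $\im\l>0$ is genuinely half-line specific.
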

\begin{proof}
If $J_{n,h}g=\l g$, $\|g\|=1$, then
$$ \l=\langle J_{n,h}g, g\rangle=\langle J_0 g,g\rangle+ih\,\langle\cp_n g,g\rangle. $$
With $g=(g_k)_{k\ge1}$, $g_0:=0$, we have
\begin{equation*}
\begin{split}
|\re\l| &=\bigl|\langle J_0 g,g\rangle\bigr|=\Bigl|\sum_{j\ge1} (g_{j-1}+g_{j+1})\ovl{g_j}\Bigr|=2\Bigl|\sum_{j\ge1} \re(g_{j-1}\ovl{g_j})\Bigr| \\
&\le 2\sum_{j\ge1} |g_{j-1}g_j|\le 2\sum_{j\ge1}|g_j|^2=2.
\end{split}
\end{equation*}
Actually, the latter inequality is strict or, otherwise, $2|g_{j-1}g_j|=|g_{j-1}|^2+|g_j|^2$ would imply $|g_{j-1}|=|g_j|$ for all $j$ 
that is impossible for a nonzero vector $g\in\ell^2$. So, $|\re\l|<2$, as claimed.

Next, obviously $0\le\im\l=h\langle\cp_n g,g\rangle\le h$. Moreover,
$$ \im\l=0 \ \Rightarrow \ \langle\cp_n g,g\rangle=0 \ \Rightarrow \ \cp_n g=0, $$
so $J_{n,h}g=J_0g=\l g$, and $g$ is the eigenvector of $J_0$ that is also impossible. Similarly,
$$ \im\l=h \ \Rightarrow \ \langle\cp_n g,g\rangle=\|\cp_n g\|^2=1 \ \Rightarrow \ \cp_n g=g, $$
and again, the conclusion $J_0 g=(\l-ih)g$ is false.
\end{proof}

Let 
\begin{equation*}
J_{n,h}g=\l g=\Bigl(z+\frac1z\Bigr)g, \quad z\in\bd_0=\bd\backslash\{0\}; \quad g=(g_j)_{j\ge1}\in\ell^2(\bn), \quad g\not=0. 
\end{equation*}
The system of recurrence relations for the coordinates looks as follows
\begin{equation}\label{eigrec}
\begin{split}
ih g_1+g_2 &=\l g_1, \\
g_{j-1}+ih g_j+g_{j+1} &=\l g_j, \quad j=2,\ldots,n, \\
g_{j-1}+g_{j+1} &=\l g_j, \quad j=n+1,\ldots\,.
\end{split}
\end{equation}

We begin with the characteristic equation for the second series of relations
\begin{equation}\label{char}
t^2-(\l-ih)t+1=(t-t_1)(t-t_2)=0, \quad t_j=t_j(z,h), \ \ j=1,2.
\end{equation}
Furthermore, 
$$ t_1t_2=1, \quad t_1+t_2=t_1+t_1^{-1}=\l-ih, $$ 
and $\im\l<h$ implies $0<|t_1|<1<|t_2|$. Put $\z:=t_1$, so
\begin{equation}\label{specvar}
\z+\frac1{\z}=\l-ih=z+\frac1z-ih, \qquad \z,z\in\bd_0.
\end{equation}

Yet another relation between $z$ and $\z$ comes from the adjustment in \eqref{eigrec}. Indeed,
\begin{equation*}
\begin{split}
g_j &=a\z^j+b\z^{-j}, \quad j=1,\ldots,n+1, \quad |a|+|b|>0; \\
g_j &=g_nz^{j-n}, \quad j=n,n+1,\ldots,
\end{split}
\end{equation*}
since $g\in\ell^2$. For $j=2$ we have $g_2=a\z^2+b\z^{-2}$. On the other hand, the first relation in \eqref{eigrec} gives
$$ g_2=(\l-ih)g_1=(\z+\z^{-1})(a\z+b\z^{-1})=a\z^2+b\z^{-2}+a+b \ \Rightarrow \ a+b=0. $$
For $j=n+1$
$$ g_{n+1}=a\z^{n+1}+b\z^{-n-1}=(a\z^n+b\z^{-n})z \ \Rightarrow \ a\z^n(z-\z)+b\z^{-n}(z-\z^{-1})=0. $$

The system of two homogeneous, linear equations has a nontrivial solution if and only if its determinant vanishes
\begin{equation}\label{determ}
\begin{vmatrix}
1 & 1 \\
\z^{n}(z-\z) & \z^{-n}(z-\z^{-1})
\end{vmatrix}
=\z^{-n}(z-\z^{-1})-\z^{n}(z-\z)=0.
\end{equation}
Hence, if $\l\in\s_d(J_{n,h})$ then \eqref{specvar} and \eqref{determ} hold.

Conversely, let a pair $z,\z\in\bd_0$ satisfy \eqref{specvar} -- \eqref{determ}, so $\z$ is the root of the characteristic equation 
\eqref{char} in $\bd$. It is a matter of direct calculation to check that
$g=(g_j)_{j\ge1}$ with
$$ g_j:=\z^j-\z^{-j}, \quad j=1,2,\ldots,n+1; \quad g_j=g_nz^{j-n}, \quad j=n+1,\ldots $$
is the eigenvector of $J_{n,h}$ with the eigenvalue $\l$.

Let us express $z$ from \eqref{determ}
\begin{equation}\label{spvind} 
z=\frac{\z^{n+1}-\z^{-n-1}}{\z^n-\z^{-n}}=\frac{\z^{2n+2}-1}{\z^{2n+1}-\z}\in\bd 
\end{equation}
and plug this expression into \eqref{specvar}. We come to the main algebraic equation associated with $J_{n,h}$
$$ \z+\frac1{\z}+ih=\frac{\z^{2n+2}-1}{\z^{2n+1}-\z}+\frac{\z^{2n+1}-\z}{\z^{2n+2}-1}\,, $$
or
\begin{equation}\label{aleq}
P_n(\z):=\z^{2n-1}(\z^2-1)^2-ih(1-\z^{2n})(1-\z^{2n+2})=0.
\end{equation}

Conversely, let $\z$ be a root of \eqref{aleq} in $\bd$ so that \eqref{spvind} holds. Then \eqref{determ} is true, and \eqref{specvar} is
a simple consequence of the equality
$$ (\z^{2n+2}-1)^2+(\z^{2n+1}-\z)^2-(\z+\z^{-1})(\z^{2n+2}-1)(\z^{2n+1}-\z)=\z^{2n}(\z^2-1)^2. $$

Thereby, we come to the following result.

\begin{proposition}\label{algeq}
Let $z\in\bd_0$. The number $\l=z+z^{-1}\in\s_d(J_{n,h})$ if and only if there is a root $\z\in\bd_0$ of the polynomial $P_n$ $\eqref{aleq}$ so that
$\eqref{spvind}$ holds.
\end{proposition}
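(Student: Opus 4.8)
The plan is to obtain the statement as a collation of the chain of equivalences already assembled in the preceding discussion, phrased as a sequence of biconditionals linking the four displayed relations \eqref{specvar}, \eqref{determ}, \eqref{spvind} and \eqref{aleq}; the proof is therefore short, the only real work being to confirm that each implication is reversible and that the constraints $z,\z\in\bd_0$ are honoured. For the forward direction, suppose $\l=z+z^{-1}\in\s_d(J_{n,h})$ with $z\in\bd_0$. By Proposition \ref{locspec} one has $\im\l<h$, so the two roots of the characteristic equation \eqref{char} split as $0<|t_1|<1<|t_2|$; setting $\z:=t_1\in\bd_0$ fixes \eqref{specvar}. The recurrence system \eqref{eigrec} then shows that a nonzero $\ell^2$ eigenvector exists exactly when the homogeneous linear system in the coefficients $(a,b)$ admits a nontrivial solution, i.e. when its determinant \eqref{determ} vanishes. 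Since $|\z|<1$ forces $\z^{n}\neq\z^{-n}$, relation \eqref{determ} can be solved uniquely for $z$, which is precisely \eqref{spvind}; substituting \eqref{spvind} into \eqref{specvar} and clearing denominators produces $P_n(\z)=0$. Thus $\z$ is a root of $P_n$ in $\bd_0$ for which \eqref{spvind} holds.

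Next I would reverse each step. Let $\z\in\bd_0$ be a root of $P_n$ satisfying \eqref{spvind}. Rearranging \eqref{spvind} gives back \eqref{determ}, while the algebraic identity displayed just before the statement recovers \eqref{specvar} from $P_n(\z)=0$ together with \eqref{spvind}; hence the pair $z,\z$ satisfies \eqref{specvar}--\eqref{determ}. The explicit vector $g_j=\z^{j}-\z^{-j}$ for $1\le j\le n+1$, continued by $g_j=g_nz^{j-n}$ for $j\ge n$, is then a genuine eigenvector: it is nonzero because $g_1=\z-\z^{-1}\neq0$ when $0<|\z|<1$, and it lies in $\ell^2$ because its tail decays like $|z|^{j}$ with $|z|<1$. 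Therefore $\l=z+z^{-1}\in\s_d(J_{n,h})$, which closes the equivalence.

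I expect the main obstacle to be bookkeeping rather than depth. One must check that every denominator occurring in \eqref{spvind} and in the passage to \eqref{aleq} is nonzero, which is guaranteed by $0<|\z|<1$ (so that $\z^{2n}\neq1$, $\z\neq0$ and $\z\neq\pm1$), and one must ensure that no spurious roots with $\z^{2n}=1$ or $\z=\pm1$ enter the correspondence. The genuinely substantive input is Proposition \ref{locspec}: it supplies $\im\l<h$, which is exactly what guarantees that the relevant root $\z=t_1$ of \eqref{char} lands inside $\bd_0$ and so makes the whole $z\leftrightarrow\z$ correspondence well posed.
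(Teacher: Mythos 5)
Your proof is correct and takes essentially the same route as the paper: the paper's own argument is exactly the chain of equivalences you collate, namely $\l\in\s_d(J_{n,h})$ $\Leftrightarrow$ \eqref{specvar} and \eqref{determ} hold for some $\z\in\bd_0$ $\Leftrightarrow$ $P_n(\z)=0$ and \eqref{spvind} hold, with the explicit eigenvector $g_j=\z^j-\z^{-j}$ (continued by $g_j=g_nz^{j-n}$) for the converse and the displayed algebraic identity used to recover \eqref{specvar}. Your bookkeeping points (nonvanishing of $\z^{2n}-1$ and of the denominators, and Proposition \ref{locspec} supplying $\im\l<h$ so that $0<|t_1|<1<|t_2|$) are precisely the ingredients the paper relies on.
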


Our next goal is to show that the number of such roots is large enough (proportional to $n$).

From now on we put $h:=n^{-\a}$, $0<\a<1$ is fixed, and write $J_{n,h}=J_{n,\a}$. The parameter $n$ is assumed to be large.

Take $0<c_2<\a<c_1<1$ and put
\begin{equation}\label{rad}
r_j =r_j(n):=1-c_j\,\frac{\log n}{2n+1}\,, \quad j=1,2, \quad r_1<r_2.
\end{equation}
Clearly,
\begin{equation}\label{rad1}
r_j^{2n+1}=n^{-c_j}\left(1+O\left(\frac{\log^2 n}{n}\right)\right), \quad j=1,2.
\end{equation}

\begin{proposition}\label{segm}
The circular segment
$$ S=S(n,a):=\Bigl\{w=re^{i\p}: \ r_1<r<r_2, \ a\,\frac{\pi}2<\p<\Bigl(\frac12-a\Bigr)\,\frac{\pi}2\Bigr\}, \ \ 0<a<\frac14, $$
contains at least $N=(1/4-a)n+O(1)$ simple roots of $P_n$ $\eqref{aleq}$.
\end{proposition}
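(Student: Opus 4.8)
The plan is to prove the statement by Rouch\'e's theorem in two stages: first replace $P_n$ \eqref{aleq} by an explicit comparison function whose zeros can be located by hand, and then count those zeros inside $S$ by separating modulus and argument. The reduction rests on the fact that in the annulus $r_1\le|\z|\le r_2$ the high powers are negligible. Indeed, by \eqref{rad1} one has $|\z|^{2n}\le r_2^{2n}=O(n^{-c_2})$, so $1-\z^{2n}$ and $1-\z^{2n+2}$ equal $1+O(n^{-c_2})$ and
\[
P_n(\z)=Q_n(\z)+R_n(\z),\qquad Q_n(\z):=\z^{2n-1}(\z^2-1)^2-ih,\qquad R_n(\z):=ih\,\z^{2n}\bigl(1+\z^2-\z^{2n+2}\bigr),
\]
where $|R_n(\z)|\le 3h\,r_2^{2n}=O(n^{-\a-c_2})$ on $\ovl S$. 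Thus $P_n$ is a tiny perturbation of $Q_n$, whose zeros are the solutions of $\z^{2n-1}(\z^2-1)^2=ih$.

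Next I would count the zeros of $Q_n$ in $S$ through moduli and arguments. The modulus equation $|\z|^{2n-1}|\z^2-1|^2=h=n^{-\a}$, together with $|\z^2-1|\asymp1$ on the angular window (since $\arg\z$ stays in a fixed subinterval of $(0,\pi/4)$, $\z^2-1$ stays bounded away from $0$), forces $(2n-1)\log|\z|=-\a\log n+O(1)$, that is $|\z|=1-\tfrac{\a\log n}{2n+1}+O(n^{-1})$. This is exactly where the choice $c_2<\a<c_1$ enters: it confines every zero strictly to $r_1<|\z|<r_2$, at distance $\asymp\tfrac{\log n}{n}$ from both bounding arcs. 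Writing $\z=\rho e^{i\p}$, the argument equation becomes $(2n-1)\p+2\arg(\z^2-1)\equiv\tfrac\pi2\pmod{2\pi}$; along the modulus curve its left-hand side is strictly increasing in $\p$ with slope $2n-1+O(1)$, so it meets each admissible value $\tfrac\pi2+2\pi k$ exactly once. The number of integers $k$ for which that value lies in the swept interval, of length $(2n-1)(\tfrac14-a)\pi+O(1)$, equals $N=(\tfrac14-a)n+O(1)$. Each such zero is simple, since at a zero $|Q_n'(\z)|\asymp(2n-1)h/|\z|\asymp n^{1-\a}\neq0$.

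Finally I would transfer the count to $P_n$. The bound $|Q_n'(\z_k)|\asymp n^{1-\a}$ at a zero $\z_k$ of $Q_n$ shows that on the circle $|\z-\z_k|=\eta$ with $n^{-1-c_2}\ll\eta\ll n^{-1}$ one has $|Q_n|\gtrsim n^{1-\a}\eta\gg n^{-\a-c_2}\ge|R_n|$; since consecutive zeros of $Q_n$ are spaced $\asymp n^{-1}\gg\eta$ apart, these circles are disjoint and contained in $S$. By Rouch\'e each encloses exactly one, necessarily simple, zero of $P_n$. As all but $O(1)$ of the zeros of $Q_n$ in $S$ lie at distance $\gg\eta$ from $\partial S$, this yields at least $(\tfrac14-a)n+O(1)$ simple zeros of $P_n$ in $S$, which is the assertion.

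The hard part will be the bijection in the middle stage: proving rigorously that, for each admissible $k$, the modulus and argument equations have exactly one common solution, and keeping honest account of the $O(1)$ zeros that may sit near the two radial edges $\p=a\tfrac\pi2$ and $\p=(\tfrac12-a)\tfrac\pi2$, where neither term of $P_n$ dominates. The first point is controlled because the argument is monotone along the modulus curve with slope of order $n$ (so the map $\z\mapsto\z^{2n-1}(\z^2-1)^2$ behaves like a large rotation there); the second because such edge zeros number $O(1)$ and are absorbed into the stated error term. These are where the quantitative care is needed, but both are benign.
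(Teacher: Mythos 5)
Your proposal is correct, and while it rests on the same tool as the paper --- Rouch\'e's theorem applied inside the segment $S$ --- it takes a genuinely different route. The paper splits $P_n=P_{n,2}-P_{n,1}$ with the \emph{binomial} $P_{n,1}(w)=2w^{2n+1}+in^{-\alpha}$ as the dominant part: its roots are explicit and equally spaced on the circle $|w|=(n^{-\alpha}/2)^{1/(2n+1)}$, Rouch\'e is applied on the boundaries of the annular sectors $S_k$ surrounding them, and the domination $|P_{n,1}|>|P_{n,2}|$ on $\partial S_k$ is elementary trigonometry, valid precisely when $|\cos 2\varphi_k|$ stays away from $1$ --- that is where the angular window enters the paper's argument. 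You instead keep the full expression $Q_n(\zeta)=\zeta^{2n-1}(\zeta^2-1)^2-ih$ as the comparison function, so that the remainder $R_n=ih\zeta^{2n}(1+\zeta^2-\zeta^{2n+2})$ is uniformly $O(n^{-\alpha-c_2})$ on $\overline{S}$ by \eqref{rad1}; this is a far less lossy splitting (the paper's $P_{n,2}$ is of the same order of magnitude as $P_{n,1}$), and it lets you run Rouch\'e on tiny disjoint circles rather than on sector boundaries. The price is that the zeros of $Q_n$ are not explicit and must be located via the modulus curve $|\zeta|^{2n-1}|\zeta^2-1|^2=h$ and the winding of the argument along it. The two points you flag as delicate do go through, and in a few lines: implicit differentiation of the modulus relation gives $\rho'(\varphi)=O(1/n)$, hence the slope $2n-1+O(1)$ and the bijection between zeros of $Q_n$ in the window and admissible integers $k$; and the lower bound $|Q_n|\gtrsim n^{1-\alpha}\eta$ on the circles $|\zeta-\zeta_k|=\eta$ follows from $|Q_n'(\zeta_k)|\asymp nh$ together with Cauchy estimates for the higher Taylor coefficients (note that $\sup_{|\zeta-\zeta_k|\le (2n)^{-1}}|Q_n|=O(h)$ by the same modulus computation), exactly because $\eta\ll 1/n$; the $O(1)$ zeros too close to the radial edges are absorbed into the stated error term. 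As for what each approach buys: the paper's version is more elementary and explicit, and it delivers at once the localization \eqref{e9.2} of the arguments $\theta_k$, which is reused in the proof of Theorem \ref{shar}; your version localizes each root of $P_n$ within $o(1/n)$ of an exact solution of $\zeta^{2n-1}(\zeta^2-1)^2=ih$ (so \eqref{e8}, and hence \eqref{e9}--\eqref{e12}, come essentially for free), needs only that $\varphi$ stay away from $0$ (so that $|\zeta^2-1|\asymp 1$) rather than a restriction tied to $|\cos2\varphi|$, and makes transparent why the count is proportional to $n$: along the modulus curve the image of $\zeta\mapsto\zeta^{2n-1}(\zeta^2-1)^2$ runs around the circle $|w|=h$ about $(1/4-a)n$ times, hitting $ih$ once per turn.
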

\begin{proof}
Write $P_n=P_{n,2}-P_{n,1}$ with
\begin{equation*}
\begin{split}
P_{n,1}(w) &=2w^{2n+1}+in^{-\a}, \\
P_{n,2}(w) &=w^{2n+1}(w^2+w^{-2})+in^{-\a}(w^{2n}+w^{2n+2}-w^{4n+2}). 
\end{split}
\end{equation*}
We wish to compare the roots of $P_n$ with the roots of a simple binomial $P_{n,1}$. The latter are known explicitly
\begin{equation*}
\begin{split}
P_{n,1}(w) &=0 \ \Leftrightarrow \ w=w_{n,k}=\g_{n}\exp\Bigl\{\frac{i\pi}{2(2n+1)}\,(4k-1)\Bigr\}, \quad k=1,\ldots,2n+1,\\
\g_n &=|w_{n,k}|=\Bigl(\frac{n^{-\a}}2\Bigr)^{1/2n+1}=1-\a\,\frac{\log n}{2n+1}+O\Bigl(\frac1n\Bigr).
\end{split}
\end{equation*}

Define the values $\p_k=\p_k(n)$ by
\begin{equation}\label{bouseg} 
\p_k:=\frac{\pi(4k+1)}{2(2n+1)}  \ \Rightarrow \ \p_{k-1}<\arg w_{n,k}<\p_k, \ \ k=1,\ldots,2n+1, 
\end{equation}
and the segments $S_k=S_k(n)$ by
$$ S_k:=\bigl\{w=re^{i\p}: \ r_1<r<r_2, \ \p_{k-1}<\p<\p_k\bigr\}. $$
Since $r_1(n)<\g_n<r_2(n)$ for large $n$, each segment $S_k$ contains exactly one root $w_{n,k}$ of $P_{n,1}$.
As it turns out, $P_{n,1}$ dominates $P_{n,2}$ on the boundary $\partial S_k$ for certain values of $k$, specified below.

We have  
\begin{equation*}
P_{n,1}(re^{i\p}) =2r^{2n+1}\cos (2n+1)\p+i(2r^{2n+1}\sin (2n+1)\p+n^{-\a}),
\end{equation*}
and so
\begin{equation}\label{e1}
|P_{n,1}(re^{i\p})| \ge |2r^{2n+1}\sin (2n+1)\p+n^{-\a}|. 
\end{equation}
For the second polynomials 
$$ P_{n,2}(re^{i\p})=r^{2n+1}e^{i(2n+1)\p}\bigl(r^2e^{2i\p}+r^{-2}e^{-2i\p}+in^{-\a}(re^{i\p}+r^{-1}e^{-i\p}-r^{2n+1}e^{i(2n+1)\p})\bigr), $$
and since 
\begin{equation}\label{e3}
\begin{split}
r^2e^{2i\p}+r^{-2}e^{-2i\p} &=(r^2-1)e^{2i\p}+2\cos 2\p+(r^{-2}-1)e^{-2i\p} \\
&= 2\cos 2\p+O\left(\frac{\log n}{n}\right), \quad r_1\le r\le r_2,
\end{split}
\end{equation}
we come to the bound
\begin{equation}\label{e2}
|P_{n,2}(re^{i\p})|\le r^{2n+1}\bigl(2\,|\cos2\p|+O(n^{-\a})\bigr), \quad r_1\le r\le r_2,
\end{equation}
with $O$ uniform in $r$ and $\p$.

Let first $\p=\p_k$, $r_1\le r\le r_2$. By \eqref{bouseg}, $\sin(2n+1)\p_k=1$ for all $k$, so \eqref{e1} , \eqref{e2} imply
\begin{equation*}
\begin{split}
|P_{n,1}(re^{i\p_k})| &\ge 2r^{2n+1}+n^{-\a}, \\
|P_{n,2}(re^{i\p_k})| &\le 2r^{2n+1}|\cos 2\p_k|+o(n^{-\a}),
\end{split}
\end{equation*}
(see \eqref{rad1} for the last term). Hence, $|P_{n,1}(re^{i\p_k})|>|P_{n,2}(re^{i\p_k})|$ for all $k$, that is, 
the domination holds on the radial sides of $S_k$.

Next, let $r=r_2$, $\p_{k-1}\le\p\le\p_k$, be the exterior arc of $S_k$. By \eqref{rad1},
\begin{equation*}
\begin{split}
|P_{n,1}(re^{i\p})| &=|2r^{2n+1}e^{i(2n+1)\p}+in^{-\a}|= r^{2n+1}\Bigl(2+ O\bigl(n^{-(\a-c_2)}\bigr)\Bigr), \\
|P_{n,2}(re^{i\p_k})| &\le r^{2n+1}\bigl(2|\cos 2\p_k|+O(n^{-\a}\bigr)).
\end{split}
\end{equation*}
We choose $k$ in such a way that the value $|\cos 2\p_k|$ is separated from $1$.
An elementary calculation shows that for
\begin{equation}\label{rangk}
\begin{split}
k\in \ll &=\ll_{n,a}:=[a_n', a_n''], \\
a_n' &=a\,\frac{2n+1}4 +1, \qquad a_n''=\Bigl(\frac12-a\Bigr)\,\frac{2n+1}4-1,
\end{split}
\end{equation}
the inequality holds
\begin{equation}\label{rangp}
a\pi<2\p_{k-1}<2\p_k<\Bigl(\frac12-a\Bigr)\pi.
\end{equation}
So, again 
$$ |P_{n,1}(r_2e^{i\p})|>|P_{n,2}(r_2e^{i\p})|, \qquad \p_{k-1}\le\p\le\p_k, \quad k\in \ll. $$

Finally, along the interior arc of $S_k$, that is, $r=r_1$, $\p_{k-1}\le\p\le\p_k$, we have $r_1^{2n+1}=O(n^{-c_1})=o(n^{-\a})$, and so
$$ |P_{n,1}(r_1e^{i\p})|\ge n^{-\a}, \qquad |P_{n,2}(r_1e^{i\p})|=O(n^{-c_1}), $$
as needed.

By Rouch\'e's Theorem, each region $S_k$ with $k\in\ll$ contains one simple root of $P_n$. The number of such regions is 
$N=(1/4-a)n+O(1)$, and, by \eqref{rangp}, all of them lie in $S$. The proof is complete.
\end{proof}

In the rest of the paper we focus on the roots of $P_n$ just found. Denote them by $\z_{k}=\z_k(n)=\rho_ke^{i\th_k}$,
$\rho_{k}=\rho_k(n)$, $\th_{k}=\th_k(n)$. The equality \eqref{aleq} with $\z=\z_k$ reads
$$ -in^{\a}\z_{k}^{2n+1}\bigl(\z_{k}-\z_{k}^{-1}\bigr)^2=(1-\z^{2n}_{k})(1-\z^{2n+2}_{k}). $$
As in \eqref{e3} above,
$$ \bigl(\z_{k}-\z_{k}^{-1}\bigr)^2=-4\sin^2\th_{k}\left(1+O\Bigl(\frac{\log n}{n}\Bigr)\right). $$
Since $\rho_{k}^{2n+1}=O\bigl(n^{-c_2}\bigr)$, we end up with
\begin{equation}\label{e8}
4i\sin^2\th_{k}\,n^\a\rho_{k}^{2n+1}\,e^{i(2n+1)\th_{k}}=1+O\bigl(n^{-c_2}\bigr).
\end{equation}

Take the absolute value and the real part in \eqref{e8}:
\begin{equation}\label{e9}
\begin{split}
4\sin^2\th_{k}\,n^\a\rho_{k}^{2n+1} &=1+O\bigl(n^{-c_2}\bigr), \\
-4\sin^2\th_{k}\,n^\a\rho_{k}^{2n+1}\sin(2n+1)\th_k &=1+O\bigl(n^{-c_2}\bigr),
\end{split}
\end{equation}
with $O$ uniform in $k\in\ll$. By plugging the first equality in \eqref{e9} into the second one, we have
\begin{equation}\label{e9.1} 
\sin (2n+1)\th_{k}=-1+O\bigl(n^{-c_2}\bigr). 
\end{equation}
Regarding $\th_k$ themselves, it is clear from the construction of $S_k$ that
\begin{equation}\label{e9.2}
\th_k=\frac{\pi}{2(2n+1)}\,(4k+1)+O\Bigl(\frac1n\Bigr), \quad k\in\ll.
\end{equation}

For the range of arguments in $S$ we have
\begin{equation}\label{bounds} 
0<4\sin^2 a\,\frac{\pi}2\le 4\sin^2\th_{k}\le 4\sin^2\Bigl(\frac12-a\Bigr)\,\frac{\pi}2<2, 
\end{equation}
so it follows from \eqref{e9} that
\begin{equation}\label{e12}
\rho_{k}=1-\a\,\frac{\log n}{2n+1}+O\Bigl(\frac1n\Bigr),
\end{equation}
with $O$ uniform in $k\in\ll$ (cf. with $\g_{n}=|w_{n,k}|$). 

\smallskip

{\it Proof of Theorem \ref{shar}}.

In view of Proposition \ref{algeq}, to make sure that the roots $\{\z_k\}_{k\in\ll}$ generate eigenvalues of $J_{n,\a}$, 
we have to check the relation \eqref{spvind}. Indeed, let
$$ z_k=z_k(n):=\frac{\z_k^{2n+2}-1}{\z_k^{2n+1}-\z_k}\,. $$
Then
\begin{equation*}
1-|z_{k}|^2 =\frac{(\rho_k^2-1)(1-\rho_k^{4n+2})-4\rho_k^{2n+2}\sin (2n+1)\th_k\sin\th_k}{|\z_k^{2n+1}-\z_k|^2}\,.
\end{equation*}
By \eqref{e12}, the first term in the numerator is 
$$ (\rho_{k}^2-1)(1-\rho_{k}^{4n+2})=O\Bigl(\frac{\log n}{n}\Bigr). $$
As for the second one, the first equality in \eqref{e9} and \eqref{bounds} imply
$$ \rho_{k}^{2n+1}=\frac{n^{-\a}}{4\sin^2\th_{k}}\,(1+O(n^{-c_2}))>\frac{n^{-\a}}4\,, $$
and in view of \eqref{e9.1} and \eqref{bounds}, the second term is
$$ -4\sin (2n+1)\th_{k}\sin\th_{k}>c>0. $$
Hence, $1-|z_{k}|^2>0$ for large enough $n$, as claimed.

Therefore, the numbers
$$ \l_k=\l_k(n):=\z_k+\z_k^{-1}+in^{-\a}=\rho_ke^{i\th_k}+\rho_k^{-1}e^{-i\th_k}+in^{-\a} $$
are {\sl distinct} eigenvalues of $J_{n,\a}$ for all large enough $n$ and $k\in\ll$ \eqref{rangk}.
We have, as in \eqref{e3} above,
\begin{equation}\label{eige}
\l_k=2\cos\th_k+in^{-\a}+O\Bigl(\frac{\log n}{n}\Bigr),
\end{equation}
and so, by Proposition \ref{locspec},
\begin{equation}\label{e15}
{\rm dist}\,(\l_{k},[-2,2]) =\im\l_{n,k}=n^{-\a}(1+o(1)). 
\end{equation}

As we restrict ourselves with the eigenvalues $\l_k$, $k\in\ll$, 
\begin{equation*}
\sum_{\l\in\s_d(J_{n,\a})} \frac{\dist(\l,[-2,2])}{|\l^2-4|^{1/2}} \ge \sum_{k\in\ll} \frac{\dist(\l_k,[-2,2])}{|\l_k^2-4|^{1/2}}=:I_{n,a}.
\end{equation*}
Next, by the location of the discrete spectrum and \eqref{eige}, we see that
\begin{equation}\label{e15.1}
\begin{split}
|\l_k+2|^{1/2} &\le (4+n^{-\a})^{1/2}<\sqrt5, \\
|\l_k-2|^{1/2} &\le 2\sin\frac{\theta_{k}}2+n^{-\a/2}(1+o(1))\le \theta_{k}+n^{-\a/2}(1+o(1)),
\end{split}
\end{equation}
and so \eqref{e15} and \eqref{e15.1} imply (see also \eqref{e9.2})
\begin{equation*}
\begin{split}
I_{n,a} &\ge \frac{n^{-\a}(1+o(1))}{\sqrt5}\,\sum_{k\in\ll}\Bigl[\frac{\pi}{2(2n+1)}(4k+1)+n^{-\a/2}(1+o(1))\Bigr]^{-1} \\
&\ge \frac{n^{1-\a}(1+o(1))}{\pi\sqrt5}\,\sum_{k\in\ll}\frac1{k+u_n}\,, \quad u_n:=\frac{n^{1-\a/2}}{\pi}\,(1+o(1))+1.
\end{split} 
\end{equation*}

An elementary bound
$$ \sum_{m_1}^{m_2} \frac1{k+u_n}>\int_{m_1}^{m_2} \frac{dx}{x+u_n}=\log\frac{m_2+u_n}{m_1+u_n} $$
with $m_1:=[a_n']+1$, $m_2:=[a_n'']$ (see \eqref{rangk})
gives
\begin{equation}\label{ina} 
I_{n,a}\ge \frac{n^{1-\a}(1+o(1))}{\pi\sqrt5}\,\log\frac{1-2a+4n^{-\a/2}(1+o(1))}{a+2n^{-\a/2}(1+o(1))}\,.
\end{equation}

Up to this point we have tacitly assumed that the value $a\in (0, 1/4)$ from Proposition \ref{segm} is fixed. As a matter of fact, $a$ can vary with $n$.
For example, if $a=n^{-\a/4}$, \eqref{ina} yields
$$ I_{n,a}\ge Cn^{1-\a}\,\log n. $$
The bound \eqref{sharp} now follows from
$$ \|J_{n,\a}-J_0\|_1=\sum_{j=1}^n |b_j(n)|=n^{1-\a}. $$
The proof is complete.

\begin{remark}
The Jost solution $u^+=(u_j^+)_{j\ge0}$ for Jacobi operators with the step potential can be found explicitly. Indeed,
the recurrence relation  
\begin{equation*}
\begin{split}
u_{k-1}^+(z) +ihu_k^+(z) +u_{k+1}^+(z) &=\Bigl(z+\frac1z\Bigr)\,u_k^+(z), \quad k=1,2,\ldots,n; \\ 
u_k^+(z) &=z^k, \quad k=n,n+1\ldots 
\end{split}
\end{equation*}
can be resolved, and we come to the following expression for the Jost function $u_0^+$ (which is the same as the perturbation determinant)
$$ L(z,J_{n,h})=z^n U_n\Bigl(\frac{z+z^{-1}-ih}2\Bigr)-z^{n+1}U_{n-1}\Bigl(\frac{z+z^{-1}-ih}2\Bigr), $$
$U_k$ is the Chebyshev polynomial of the second kind. It might be a challenging problem analyzing the roots of the polynomial on the right side directly.
\end{remark}


\begin{thebibliography}{9999}




\bibitem{akwa95}
A. Aptekarev, V. Kaliaguine, W. van Assche, Criterion for the resolvent set of nonsymmetric tridiagonal operators.
Proc. of the AMS, {\bf 123} (1995), no. 8, 2423--2430.

\bibitem{beck01}
B. Beckermann, Complex Jacobi matrices. Journal of Comp. Appl Math. {\bf 127} (2001), no. 1, 17--65.

\bibitem{bostam20}
S. B\"ogli, F. Stampach, On Lieb--Thirring inequalities for one-dimensional non-self-adjoin Jacobi and Schr\"odinger operators. 
Preprint (2020), arXiv:2004.09794.

\bibitem{bfv19}
A. Borichev, R. Frank, A. Volberg, Counting eigenvalues of Schr\"odinger operators with fast decaying complex potentials.
Preprint (2019), arXiv:1811.05591.

\bibitem{bgk09}
A. Borichev, L. Golinskii, S. Kupin, A Blaschke-type condition and its application to complex Jacobi matrices. Bull. Lond. Math. Soc.
{\bf 41} (2009), no. 1, 117--123.

\bibitem{bgk18}
A. Borichev, L. Golinskii, S. Kupin, On zeros of analytic functions satisfying non-radial growth conditions. Rev. Mat. Iberoam. 
{\bf 34} (2018), no. 3, 1153--1176.

\bibitem{dhk13}
M. Demuth, M. Hansmann, G. Katriel, Eigenvalues of non-selfadjoint operators: a comparison of two approaches. In: Mathematical physics,
spectral theory and stochastic analysis, Oper. Theory Adv. Appl., vol. 232, Birkh\"auser/Springer Basel AG, Basel, 2013, pp. 107--163.

\bibitem{eggo05}
I. Egorova, L. Golinskii, On the location of the discrete spectrum for complex Jacobi matrices. Proc. Amer. Math. Soc. {\bf 133} (2005),
no. 12, 3635--3641.

\bibitem{fago09}
S. Favorov, L. Golinskii, A Blaschke-type condition for analytic and subharmonic functions and application to contraction operators. 
Advances in Mathematical Sciences, Linear and Complex Analysis (dedicated to V. P. Havin), {\bf 226} (2009), 37--47.

\bibitem{fran20}
R. Frank, The Lieb--Thirring inequalities: recent results and open problems. Preprint (2020), arXiv:2007.09326.

\bibitem{gokr67}
I.C. Gohberg, M.G. Krein, {\it Introduction to the Theory of Linear Nonselfadjoint Operators}, Translations of Mathematical Monographs,
vol. 18, AMS, Providence RI, 1969.

\bibitem{go21-1}
L. Golinskii, A remark on the discrete spectrum of non-self-adjoint Jacobi operators. Preprint (2021), arXiv:2101.01974.

\bibitem{goeg05}
L. Golinskii, I. Egorova, On limit sets for the discrete spectrum of complex Jacobi matrices. Mat. Sb. {\bf 196} (2005), no. 6, 43--70 (Russian);
English translation in Sb. Math. {\bf 196} (2005), no. 5-6, 817--844.

\bibitem{goku07}
L. Golinskii, S. Kupin, Lieb–Thirring Bounds for Complex Jacobi Matrices. Lett. Math. Phys. {\bf 82} (2007), 79--90.

\bibitem{haka11}
M. Hansmann, G. Katriel, Inequalities for the eigenvalues of non-selfadjoint Jacobi operators. Complex Anal. Oper. Theory {\bf 5} (2011),
no. 1, 197--218.

\bibitem{ibst19}
O. Ibrogimov, F. Stampach, Spectral enclosures for non-self-adjoint discrete Schr\"odinger operators. Preprint (2019), arXiv:1903.08620.

\bibitem{kisi03}
R. Killip, B. Simon, Sum rules for Jacobi matrices and their applications to spectral theory. Ann. of Math. (2) {\bf 158} (2003), no. 1, 253--321.

\bibitem{lya68}
V. E. Lyantse, Spectrum and resolvent of a non-selfconjugate difference operator. Ukr. Math. J. {\bf 20} (1968), no. 4, 489--503.

\bibitem{sisz}
B. Simon, {\it Szeg\H{o}'s Theorem and Its Descendants}, Princeton University Press, Princeton, 2011.


\bibitem{te00}
G. Teschl, {\it Jacobi operators and completely integrable nonlinear lattices}, Mathematical Surveys and Monographs, vol. 72, AMS, Providence RI, 2000.


\end{thebibliography}
\end{document}